\theoremstyle{theorem}
\newtheorem{theorem}{Theorem}[section]
\theoremstyle{corollary}
\newtheorem{corollary}{Corollary}[section]
\theoremstyle{lemma}
\newtheorem{lemma}{Lemma}[section]
\theoremstyle{definition}
\newtheorem{definition}{Definition}[section]
\theoremstyle{conjecture}
\theoremstyle{proof}
\theoremstyle{remark}
\newtheorem*{rem}{Remark}
  \theoremstyle{example}
\newtheorem*{example}{Example}
\DeclareMathAlphabet{\mathpzc}{OT1}{pzc}{m}{it}
\begin{document}
\title{On the normalized spectrum of threshold graphs}
\author[1,2]{\rm Anirban Banerjee}
\author[1]{\rm Ranjit Mehatari}
\affil[1]{Department of Mathematics and Statistics}
\affil[2]{Department of Biological Sciences}
\affil[ ]{Indian Institute of Science Education and Research Kolkata}
\affil[ ]{Mohanpur-741246, India}
\affil[ ]{\textit {\{anirban.banerjee, ranjit1224\}@iiserkol.ac.in}}
\maketitle
\begin{abstract}
In this article we investigate normalized adjacency eigenvalues {(simply normalized eigenvalues)} and normalized adjacency energy of connected threshold graphs. A threshold graph can always be represented as a unique binary string. Certain eigenvalues are obtained directly from its binary representation and the rest of the eigenvalues are evaluated from its normalized equitable partition matrix. Finally, we characterize threshold graphs with at most five distinct eigenvalues.
\end{abstract}
\textbf{AMS classification }05C50.\\
\textbf{Keywords }normalized eigenvalues; pineapple graph; Randi\'c matrix; threshold graphs; normalized Laplacian
\section{Introduction}
In this paper, we only consider simple, connected, undirected finite graphs. Let $\Gamma=(V,E)$ be an $n$ vertex graph with $V=\{1,2,\ldots,n\}$.  Two vertices $i,j\in V$ are called neighbors, $i\sim j$, when they are connected by an edge  in $E$. Let
$d_i$ denote the degree of a vertex $i$. Let $A$ be the adjacency matrix \cite{Cve} of $\Gamma$ and let $D$ be the diagonal matrix of vertex degrees of $\Gamma$. The \textit{normalized adjacency} matrix $\mathcal{A}$ of $\Gamma$ is defined by $\mathcal{A}=D^{-1}A$ which is  similar to the matrix $R=D^{-\frac{1}{2}}AD^{-\frac{1}{2}}$, called the \textit{Randi\'c matrix} \cite{Boz2} of $\Gamma$. Thus the matrices $R$ and $\mathcal{A}$ have same eigenvalues. The matrix $\mathcal{A}$ is a row-stochastic matrix, often called the \textit{transition matrix} of $\Gamma$. For any function $f:V(\Gamma)\longrightarrow \mathbb{R}$, $\mathcal{A}$ is given by 
$$\mathcal{A} f(i)=\frac{1}{d_i}\sum_{j\sim i}f(j),\text{ for all }i\in V(\Gamma).$$ 
Furthermore, $\mathcal{A}$  is self-adjoint with respect to the inner product  defined by $$\langle u,v\rangle=\sum_id_iu(i)v(i).$$

{The} normalized adjacency matrix has a direct connection with the \textit{normalized Laplacian matrix} $\mathcal{L}=I_n-R$ studied in \cite{Chung}, and with $\Delta=I_n-\mathcal{A}$ studied in \cite{Ban1,Ban3}. Thus, for any graph $\Gamma$, if $\lambda$ is an eigenvalue of the normalized Laplacian matrix then $1-\lambda$ is an eigenvalue of the normalized adjacency matrix. The matrix $\mathcal{A}$ has some nice properties, such as, $1$ is always an eigenvalue of $\Gamma$ with $e=\left[\begin{array}{cccc}
1&1&\ldots&1
\end{array}\right]^T$ as its corresponding eigenvector. The eigenvalues of $\mathcal{A}$ are bounded below by $-1$, and the lower bound is attained if and only if the graph is bipartite. So, $-1$ is the trivial lower bound for eigenvalues of $\mathcal{A}$. For the results on the non-trivial bounds of eigenvalues we refer to \cite{Ban2,Chung,Rojo}. If $\lambda$ is an eigenvalue of $\mathcal{A}$ then there is nonzero function $f$ satisfying $\mathcal{A} f-\lambda f=0$, which yields the eigenvalue equation
$$\frac{1}{d_i}\sum_{j\sim i}f(j)=\lambda d_if(i),\textit{ }\forall i\in\Gamma.$$
Since $\mathcal{A}$ is self-adjoint, thus any eigenvector $f$ other than $e$ satisfies 
$$\sum_{i=1}^nd_if(i)=0.$$
\begin{definition}
A graph is called \textit{threshold graph} if it does not contain $C_4$, $P_4$ or $2K_2$ as its induced subgraph. 
\end{definition}

A threshold graph can be obtained from a single vertex by repeatedly  performing  one of the  two graph operations, namely, (a) addition of a single isolated vertex to the graph or (b) addition of a single dominating vertex to the graph, i.e., a single vertex which connects to all other existing vertices. Thus, an $n$-vertex threshold graph is represented by a binary string $b_1\ldots b_n$ where $b_1=0$ and, for $2\leq i\leq n$, $b_i=0$ if the vertex $i$ is added as an isolated vertex, and $b_i=1$ if the same is added as a dominating vertex.

Hence, for a connected threshold graph with $n\geq 2$ vertices $b_n=1$. The binary representation of a threshold graph is unique, and conversely, for any above mentioned binary string there is exactly one threshold graph. Thus, there are exactly $2^{n-2}$ distinct connected threshold graphs of order $n$. 

{Threshold graphs were introduced in 1977 \cite{Chv,Hen} and they became popular as they have numerous applications in computer science and psychology \cite{Mah}. Recently many researchers investigated} the eigenvalues  of different matrix representations of threshold graphs. In \cite{Sci}, Sciriha and Farrugia studied the spectral properties of adjacency matrix of a threshold graph. Bapat \cite{Ba} found the determinant of the adjacency matrix of a threshold graph. He showed that the nullity of a threshold graph can be calculated directly from its binary string. Jacobs, Trevisan and Tura published several articles on the adjacency spectrum of threshold graphs \cite{Jac1,Jac2,Jac3}. They developed algorithms to locate the eigenvalues \cite{Jac1} and to compute the characteristic polynomial \cite{Jac2} of a threshold graph. They showed that the adjacency matrix of a threshold graph does not have any eigenvalue in $(-1,0)$. 

{From now on, a threshold graph will be considered as connected.} Without loss of generality, we denote the binary string of a  threshold graph $\Gamma$ by $b=0^{s_1}1^{t_1}\ldots0^{s_k}1^{t_k}$, where $s_i,t_i\in \mathbb{N}$. Let $s=\sum s_i$ and $t=\sum t_i$, respectively, denote the number of $0$'s and the number of $1$'s in the string. Also let $S_i=\sum_{j=1}^i s_j$ and $T_i=\sum_{j=1}^i t_j$.\\
 
 For a square matrix $M$ the triple $(n_-(M),n_0(M),n_+(M))$ is called the inertia $M$, where  $n_-(M)$ and $n_+(M)$ denote the number of negative and positive eigenvalues, respectively, whereas, $n_0(M)$ is the nullity of $M$. Bapat \cite{Ba} and Jacobs et al.~\cite{Jac3} described the inertia of the adjacency matrix for threshold graphs. Now we give two simple results related to inertia and determinant of $\mathcal{A}$ of threshold graphs. 

\begin{figure}[h]
\centering
\includegraphics[width=\textwidth]{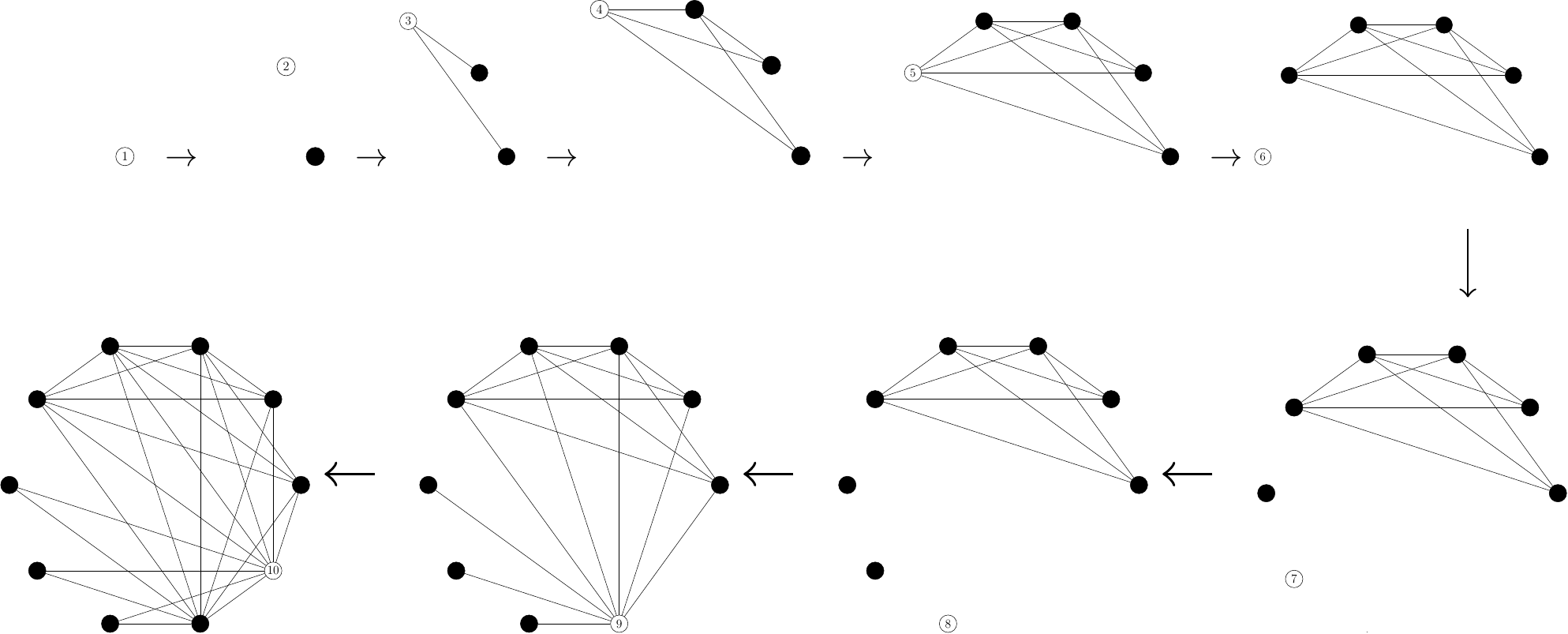}
\caption{Creation of the threshold graph from the string $0011100011.$} \label{tldfig1}
\end{figure}

\begin{theorem}
\label{tldthm1}
Let $0^{s_1}1^{t_1}\ldots0^{s_k}1^{t_k}$ be the binary string of a threshold graph $\Gamma$. Then
\begin{itemize}
\item[1.]
$n_-(\mathcal{A})=t,$
\item[2.]
$n_0(\mathcal{A})=s-k,$
\item[3.]
$n_+(\mathcal{A})=k$.
\end{itemize}
\end{theorem}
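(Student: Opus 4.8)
The plan is to reduce the inertia of $\mathcal{A}$ to that of the ordinary adjacency matrix $A$, for which the inertia of threshold graphs is already recorded in~\cite{Ba,Jac3}. The crucial observation is the one made in the introduction: $\mathcal{A}=D^{-1}A$ is similar to the symmetric Randi\'c matrix $R=D^{-\frac12}AD^{-\frac12}$, so $\mathcal{A}$ and $R$ have the same spectrum and in particular the same inertia. Since $\Gamma$ is connected, every degree is positive, hence $D^{-\frac12}$ is a real nonsingular diagonal matrix, and the identity $R=D^{-\frac12}AD^{-\frac12}$ exhibits $R$ as a congruence transform of the symmetric matrix $A$. By Sylvester's law of inertia, congruent real symmetric matrices have equal inertia, so
\[
\big(n_-(\mathcal{A}),\,n_0(\mathcal{A}),\,n_+(\mathcal{A})\big)=\big(n_-(A),\,n_0(A),\,n_+(A)\big).
\]
Thus it suffices to show that the adjacency matrix of the threshold graph with string $0^{s_1}1^{t_1}\ldots 0^{s_k}1^{t_k}$ has inertia $(t,\,s-k,\,k)$; once this is in place one may simply quote~\cite{Ba,Jac3} and finish.

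For a self-contained argument I would instead pin down the spectrum structurally, via twin vertices and the equitable partition into the $2k$ cells $C_1^0,C_1^1,\dots,C_k^0,C_k^1$ determined by the blocks of the string. The $s_i$ vertices of the block $0^{s_i}$ are pairwise nonadjacent and share a common open neighbourhood, so for any two of them $u,v$ the vector $e_u-e_v$ lies in the kernel of $\mathcal{A}$; this gives $\sum_i(s_i-1)=s-k$ independent null vectors. Likewise the $t_i$ vertices of $1^{t_i}$ are pairwise adjacent with a common closed neighbourhood, and for two such $u,v$ the vector $e_u-e_v$ is an eigenvector of $\mathcal{A}$ with the negative eigenvalue $-1/d_u$, producing $\sum_i(t_i-1)=t-k$ negative eigenvalues. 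These vectors sum to zero on each cell and vanish off it, so they exhaust the $n-2k$‑dimensional ``non‑quotient'' eigenspace; hence the non‑quotient part of the spectrum contributes exactly $(t-k,\,s-k,\,0)$ to $(n_-,n_0,n_+)$.

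The remaining $2k$ eigenvalues are those of the normalized equitable partition matrix $B$ on the cells $C_i^0,C_i^1$, and the heart of the proof is to show $B$ has inertia $(k,0,k)$: added to the twin contribution this gives precisely $(t,\,s-k,\,k)$. One positive eigenvalue of $B$ is free, namely the Perron value $1$ with eigenvector $e$. I expect the sign count for the remaining $2k-1$ quotient eigenvalues — and especially the nonsingularity of $B$, ensuring no extra zeros appear — to be the main obstacle. I would attack it either by symmetrizing $B$ (scaling by the square roots of the cell sizes) and recognizing the result as a vertex-weighted congruence copy of the adjacency quotient of the reduced threshold graph $(01)^k$, whose inertia yields to induction on $k$, or more directly by computing $\det B\neq 0$ (which also feeds the companion determinant result) together with an interlacing or continuity argument tracking the signs as the block sizes $s_i,t_i$ vary.
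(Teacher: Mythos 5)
Your opening reduction is exactly the paper's proof: since $\Gamma$ is connected, $\mathcal{A}$ is similar to $R=D^{-\frac{1}{2}}AD^{-\frac{1}{2}}$, which is congruent to the symmetric matrix $A$, so Sylvester's law of inertia transfers the inertia of $\mathcal{A}$ to that of $A$, and the paper then, just as you propose, quotes the adjacency inertia of a threshold graph from Theorems 1 and 2 of \cite{Jac3}. The self-contained twin-vertex/quotient argument you sketch afterwards is an optional extra and is left incomplete (the inertia $(k,0,k)$ of the quotient matrix is precisely the gap you acknowledge), but the citation route alone already closes the proof in the same way the paper does.
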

\begin{proof}
Let $A$ denote the adjacency matrix for $\Gamma$. We have $A=D^{\frac{1}{2}}RD^{\frac{1}{2}}$. Thus, our required result can be obtained from Theorem 1 and Theorem 2 of \cite{Jac3} and \textit{Sylvester's law of inertia}.
\end{proof}

\begin{theorem}
\label{tldthm2}
Let $0^{s_1}1^{t_1}\ldots0^{s_k}1^{t_k}$ be the binary string of a threshold graph $\Gamma$. Then
$$det(\mathcal{A})=\begin{cases}
\frac{(-1)^tt_1t_2\ldots t_k}{t(t-T_1)\ldots t_k(t+S_1-1)^{t_1}(t+S_2-1)^{t_2}\ldots (n-1)^{t_k}}, &\text{ if }s_i=1\ \forall i\\
0,&\text{ otherwise.}
\end{cases}$$
\end{theorem}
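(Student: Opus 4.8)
The plan is to exploit the factorization $\mathcal{A}=D^{-1}A$, so that $\det(\mathcal{A})=\det(A)/\prod_{i=1}^n d_i$, and to split into the two cases. First I would dispose of the vanishing case: if $s_i\geq 2$ for some $i$ then $s=\sum_i s_i>k$, so Theorem \ref{tldthm1} gives $n_0(\mathcal{A})=s-k>0$, whence $\mathcal{A}$ is singular and $\det(\mathcal{A})=0$. So I may assume $s_i=1$ for every $i$; then $s=k$ and $n=k+t$, and in particular $\mathcal{A}$ is nonsingular.

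Second, I would read the degrees off the threshold construction and check that $\prod_i d_i$ is exactly the denominator. The unique isolated-type vertex of block $i$ is joined precisely to the $1$'s inserted after it, so its degree is $t-T_{i-1}$; as $i$ runs from $1$ to $k$ these are $t,\,t-T_1,\ldots,t-T_{k-1}=t_k$. A dominating-type vertex of block $i$ is joined to every earlier vertex and to every later $1$, so each of the $t_i$ such vertices has degree $t+S_i-1$ (recall $S_i=i$ here), the last block giving $n-1$. Hence $\prod_{i=1}^n d_i=t(t-T_1)\cdots t_k\,(t+S_1-1)^{t_1}\cdots(n-1)^{t_k}$, which is precisely the denominator in the statement.

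Third, I would settle $\det(A)$. The sign is free: since $n_0(\mathcal{A})=0$ and $\prod_i d_i>0$, Theorem \ref{tldthm1} gives $n_-(\mathcal{A})=t$, so $\det(\mathcal{A})$, and hence $\det(A)$, has sign $(-1)^t$. For the magnitude $|\det(A)|=t_1\cdots t_k$ I would either quote Bapat's determinant formula \cite{Ba}, or argue directly: the $t_i$ dominating vertices of block $i$ are pairwise adjacent with identical closed neighbourhoods, so the rows of $A+I$ indexed by them coincide and $-1$ occurs as an eigenvalue of $A$ with multiplicity $t_i-1$ in each block, accounting for a factor $(-1)^{t-k}$; the remaining $2k$ eigenvalues are those of the equitable quotient matrix $B$ on the $2k$ parts given by the $k$ isolated-type vertices and the $k$ blocks of dominating-type vertices, and evaluating $\det(B)=(-1)^k t_1\cdots t_k$ (by induction on $k$, or by row reduction using the nested staircase pattern of $B$) yields $\det(A)=(-1)^{t-k}\det(B)=(-1)^t t_1\cdots t_k$.

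Dividing, $\det(\mathcal{A})=\det(A)/\prod_i d_i$ is exactly the claimed fraction. I expect the third step to be the main obstacle: establishing $|\det(A)|=t_1\cdots t_k$, either by invoking Bapat correctly or by evaluating the $2k\times 2k$ quotient determinant. The degree computation in the second step is elementary but must be carried out carefully, since the two interleaved products $t-T_{i-1}$ and $t+S_i-1$ have to match the denominator exactly as written.
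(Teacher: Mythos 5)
Your proposal is correct and follows essentially the same route as the paper: factor $\det(\mathcal{A})=\det(A)/\prod_i d_i$, compute the degree sequence block by block (identically to the paper's computation), and quote Bapat's formula for $\det(A)$. Your two small deviations --- disposing of the singular case via the nullity count of Theorem \ref{tldthm1} rather than via Bapat's formula, and the optional self-contained twin-vertex/equitable-quotient derivation of $\det(A)=(-1)^t t_1\cdots t_k$ --- are both sound, the latter paralleling the paper's own Theorem \ref{tldthm6}.
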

\begin{proof}
{
We have $det(\mathcal{A})=det(D)^{-1}det(A)$. By Theorem 5 of \cite{Ba}, $$det(A)=\begin{cases}(-1)^tt_1t_2\ldots t_k,&\text{if }s_i=1\ \forall i\\
0, &\text{otherwise}.
\end{cases}$$ Thus the result follows, since the graph $\Gamma$ has $s_1$ vertices of degree $t$, $t_1$ vertices of degree $t+S_1-1$,  $s_2$ vertices of degree $t-t_1$, $t_2$ vertices of degree $t+S_2-1$, and so on.}
\end{proof}
\begin{corollary}
Let $\Gamma$ be the threshold graph with binary string $01\ldots01$, then
$det(\mathcal{A})=(-1)^\frac{n}{2}\frac{2}{n!}$.
\end{corollary}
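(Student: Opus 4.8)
The plan is to specialize Theorem~\ref{tldthm2} to the string $01\ldots01$ and then collapse the resulting product into a single factorial. First I would read off the parameters of this graph: the string $01\ldots01$ is exactly $0^{s_1}1^{t_1}\ldots0^{s_k}1^{t_k}$ with every $s_i=t_i=1$ and $k=n/2$, so that $s=t=n/2$, $S_i=T_i=i$, and in particular $s_i=1$ for all $i$. Hence we land in the first branch of Theorem~\ref{tldthm2}, and the numerator $(-1)^t t_1t_2\cdots t_k$ immediately collapses to $(-1)^{n/2}$ since each $t_i=1$.

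The bulk of the work is to evaluate the denominator, which is precisely $\det(D)$, the product of all vertex degrees. Here I would reuse the degree bookkeeping from the proof of Theorem~\ref{tldthm2}: in the $i$-th block the unique $0$-vertex has degree $t-T_{i-1}=t-(i-1)$ and the unique $1$-vertex has degree $t+S_i-1=t+i-1$. As $i$ runs from $1$ to $n/2$, the $0$-vertex degrees run through $t,t-1,\ldots,1$ while the $1$-vertex degrees run through $t,t+1,\ldots,n-1$, which is the observation that turns the product into two factorials.

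Putting these together I would compute
$$\det(D)=\left(\prod_{i=1}^{n/2}(t-i+1)\right)\left(\prod_{i=1}^{n/2}(t+i-1)\right)=t!\cdot\frac{(n-1)!}{(t-1)!}=\frac{n}{2}\,(n-1)!=\frac{n!}{2},$$
using $t=n/2$ in the last two steps. Substituting into $\det(\mathcal{A})=\det(A)/\det(D)=(-1)^{n/2}/\det(D)$ then yields the claimed value $(-1)^{n/2}\tfrac{2}{n!}$.

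The step I expect to demand the most care is matching the somewhat compressed denominator written in the statement of Theorem~\ref{tldthm2} with the honest product of degrees, i.e.\ confirming that the factors $t(t-T_1)\ldots$ and $(t+S_1-1)^{t_1}\ldots(n-1)^{t_k}$ are exactly the degree sequence of the $0$- and $1$-vertices in this special case. Once that identification is made, the two factorial products and the telescoping $\tfrac{n}{2}(n-1)!=\tfrac{n!}{2}$ are routine.
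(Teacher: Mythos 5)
Your proposal is correct and follows exactly the route the paper intends: the corollary is stated as an immediate specialization of Theorem~\ref{tldthm2} (the paper gives no separate proof), and your computation of the degree product $\det(D)=t!\cdot\frac{(n-1)!}{(t-1)!}=\frac{n!}{2}$ with $t=n/2$ is the right way to collapse the theorem's denominator in this case.
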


\section{Eigenvalues of threshold graphs}
Let $0^{s_1}1^{t_1}\ldots0^{s_k}1^{t_k}$ be the binary string of a threshold graph $\Gamma$. Let $\pi=\lbrace V_{s_1},V_{t_1},\ldots,V_{t_k}\rbrace$ be a partition of the vertex set of $\Gamma$, such that, $V_{s_1}$ contains first $s_1$ vertices of $\Gamma$, $V_{t_1}$ contains next $t_1$ vertices of $\Gamma$ and so on. The partition $\pi$ is an equitable partition of $\Gamma$. If there is some $t_i>1$, then certain eigenvalues of $\mathcal{A}$ can be estimated directly from the string $b$.
\begin{lemma}
\label{tldlem1}
Let $0^{s_1}1^{t_1}\ldots0^{s_k}1^{t_k}$ be the binary string of a threshold graph $\Gamma$. Then, for each $t_i>1$, $-\frac{1}{t+S_i-1}$ is an eigenvalue of $\mathcal{A}$ of multiplicity at least $t_i-1$.
\end{lemma}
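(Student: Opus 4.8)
The plan is to exploit the fact that the $t_i$ vertices forming the block $V_{t_i}$ are mutually adjacent duplicate (closed-twin) vertices, and to build the required eigenspace from functions supported on this block.

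First I would record the two structural facts that drive everything. Since each dominating vertex is joined to every vertex already present, any two vertices $u<v$ lying in the same block $V_{t_i}$ are adjacent, and moreover they have identical closed neighbourhoods $N[u]=N[v]$: the vertices separating $u$ and $v$ are themselves dominating and belong to the block, so the discrepancy between $\{w\le u\}$ and $\{w\le v\}$ is exactly cancelled by the discrepancy among the dominating vertices that follow. Consequently any vertex $i\notin V_{t_i}$ is adjacent either to all of $V_{t_i}$ or to none of it, and---as already noted in the proof of Theorem \ref{tldthm2}---every vertex of $V_{t_i}$ has the same degree $t+S_i-1$.

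Next I would consider the subspace $W_i$ of functions $f:V(\Gamma)\to\mathbb{R}$ that vanish off $V_{t_i}$ and satisfy $\sum_{w\in V_{t_i}}f(w)=0$; this space has dimension $t_i-1$. To see that every $f\in W_i$ is an eigenfunction for the eigenvalue $-\tfrac{1}{t+S_i-1}$, I would simply check the eigenvalue equation $\mathcal{A}f(i)=\tfrac{1}{d_i}\sum_{j\sim i}f(j)$ at each vertex. For $i\notin V_{t_i}$ both sides vanish: either $i$ misses the block entirely, or it sees the whole block and the neighbour-sum collapses to $\sum_{w\in V_{t_i}}f(w)=0$, while in either case $f(i)=0$. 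For $i\in V_{t_i}$ the neighbours of $i$ inside the block are exactly the other $t_i-1$ block vertices, so the sum telescopes to $\big(\sum_{w\in V_{t_i}}f(w)\big)-f(i)=-f(i)$, and dividing by $d_i=t+S_i-1$ produces precisely $-\tfrac{1}{t+S_i-1}f(i)$.

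Since the $t_i-1$ functions spanning $W_i$ are linearly independent eigenfunctions for the common eigenvalue $-\tfrac{1}{t+S_i-1}$, the multiplicity is at least $t_i-1$, as claimed. I expect no serious obstacle here: the only point requiring care is the verification that $u$ and $v$ in the same block genuinely share a closed neighbourhood (the cancellation step), after which the eigenvalue computation is a short calculation. The phrase \emph{at least} in the statement is explained by the fact that the remaining eigenvalues, arising from the normalized quotient matrix of the equitable partition $\pi$, might independently reproduce the value $-\tfrac{1}{t+S_i-1}$.
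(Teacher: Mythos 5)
Your proof is correct and takes essentially the same approach as the paper: the paper's eigenvectors $f_{t_i}^{(j)}$ (supported on $V_{t_i}$, entries summing to zero) are precisely an explicit orthogonal basis of your sum-zero subspace $W_i$, and the verification of the eigenvalue equation at vertices inside and outside the block is identical. The only cosmetic difference is that you argue via the dimension of the whole subspace, whereas the paper writes down particular basis vectors and checks their orthogonality; both give multiplicity at least $t_i-1$.
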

\begin{proof}
We show that, for $t_i>1$, there exist $t_i-1$ number of mutually orthogonal set of eigenvectors.\\
Let $X_{p}^{(l)}$ denote the $p$-tuples such that
$$X_p^{(l)}(j)=\begin{cases}
1,&\text{if }j<k,\\
-l+1,&\text{if }j=l,\\
0,&\text{otherwise.}
\end{cases}$$
Now, for $t_i>1$, we construct the functions $f_{t_i}^{(2)},\ldots,f_{t_i}^{(t_i)},$ where $$f_{t_i}^{(j)}=\left[\begin{array}{ccccccccc}
O_{s_1}&O_{t_1}&\cdots&O_{s_i}&X_{t_i}^{(j)}&O_{s_{i+1}}&\cdots&O_{t_k}
\end{array}\right]^T.$$
For $2<j_1<j_2\leq t_i,$
\begin{eqnarray*}
\langle f_{t_i}^{(j_1)},f_{t_i}^{(j_2)}\rangle&=&\sum _{x\in V(\Gamma)} d_xf_{t_i}^{(j_1)}(x)f_{t_i}^{(j_1)}(x) \\&&=(t+S_i-1)(1+\cdots+1-j_1+1)\\
&&=0.
\end{eqnarray*}
Now, we show that, for $2\leq j\leq t_i$, $f_{t_i}^{(j)}$ is an eigenvector of $\mathcal{A}$. We have, 
$$\mathcal{A}f_{t_i}^{(j)}(x)=\begin{cases}
0,\ &\text{if }x\notin V_{t_i},\\
-\frac{1}{t+S_i-1}f_{t_i}^{(j)}(x),\ &\text{if}\ x\in V_{t_i}.
\end{cases}$$
Thus, $f_{t_i}^{(2)},\ldots,f_{t_i}^{(t_k)}$ are the eigenvectors corresponding to the eigenvalue $-\frac{1}{t+S_i-1}$. Hence the multiplicity of the eigenvalue $-\frac{1}{t+S_i-1}$ is at least $t_i-1.$
\end{proof}
\begin{rem}
If $s>k$, we can construct the eigenvectors corresponding to the eigenvalue $0$. For $s_i>1$, we construct the functions $f_{s_i}^{(2)},\ldots,f_{s_i}^{(s_i)}$ as
$${f_{s_i}^{(j)}}=\left[\begin{array}{ccccccccc}
O_{s_1}&O_{t_1}&\cdots&O_{t_{i-1}}&X_{s_i}^{(j)}&O_{t_i}&\cdots&O_{t_k}
\end{array}\right]^T,$$
which provide an orthogonal set of $s-k$ eigenvectors corresponding to the eigenvalue 0 {of the matrix $\mathcal{A}$}. Let $T$ be the collection of {the normalized eigenvalues} of a threshold graph (with the string $0^{s_1}1^{t_1}\ldots0^{s_k}1^{t_k}$) which are obtained directly from the string. Then \begin{eqnarray*}
T=\begin{cases}
\{0\}\cup\{\frac{-1}{t+s_i-1}:t_i>1\},&\text{if }s>k,\\
\{\frac{-1}{t+s_i-1}:t_i>1\},&\text{if }s=k.
\end{cases}
\end{eqnarray*}
So, $T=\emptyset$ if $s_i,t_i=1,\ \text{for } 1\leq i\leq k.$
\end{rem}
\begin{figure}[h]
\centering
\includegraphics[width=12cm]{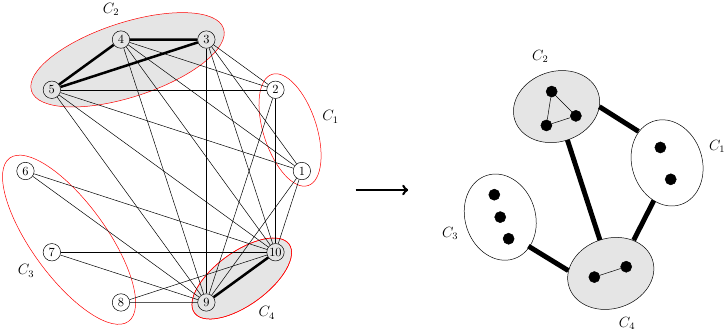}
\caption{Partition representation for the binary string $0011100011.$} \label{tldfig3}
\end{figure}
Let $\Gamma$ be a graph and $\pi=\lbrace C_1,C_2,\ldots,C_m\rbrace$ be an equitable partition of $\Gamma$. Let $B_\pi$ be the partition matrix for the partition $\pi$. The matrix $B_\pi=(b_{ij})$ is an $m\times m$ matrix whose $(i,j)$-th entry equals to the number of connections from a vertex $v\in C_i$ to the vertices of $C_j$.  We define $\mathcal{B}_\pi=D_\pi^{-1}B_\pi$ where $D_\pi$ is the diagonal matrix with $i$th {diagonal entry $d(C_i)=\sum_jb_{ij}$} which is equal to the degree of each vertex in $C_i$. Let $P$ be the characteristics matrix  of $\pi$. Hence $P$ is an $n\times m$ matrix and $P^TP$ is the diagonal matrix whose $i$th diagonal {entry} is $|C_i|$. {The adjacency eigenvalues of equitable partitions have been discussed previously in literature \cite{Cve,Sci}. We use similar concept to estimate the normalized adjacency eigenvalues of an
equitable partition.} Now, we introduce the following lemma.
\begin{lemma}
\label{tldlem2}
Let $\Gamma$ be a graph and let $\pi$ be an equitable partition of $\Gamma$. Then $\mathcal{A} P=P\mathcal{B}_\pi$.
\end{lemma}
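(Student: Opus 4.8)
The plan is to factor $\mathcal{A}=D^{-1}A$ and reduce the claim to the classical (unnormalized) equitable-partition identity $AP=PB_\pi$, and then to push the inverse degree matrix through the characteristic matrix $P$ using the fact that the degree is constant on each cell of $\pi$.

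First I would verify the unnormalized identity $AP=PB_\pi$ by comparing entries. Writing $c(v)$ for the index of the cell containing a vertex $v$, the $(v,i)$-entry of $AP$ is $\sum_{w}A_{vw}P_{wi}=\sum_{w\in C_i}A_{vw}$, which is precisely the number of neighbours of $v$ lying in $C_i$; since $\pi$ is equitable, this count depends only on the cell of $v$ and equals $b_{c(v),i}$. On the other hand $(PB_\pi)_{vi}=\sum_j P_{vj}b_{ji}=b_{c(v),i}$, because $P_{vj}=1$ exactly when $j=c(v)$ and is $0$ otherwise. Hence the two matrices agree entrywise.

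The key step is the commutation relation $D^{-1}P=PD_\pi^{-1}$. This holds because every vertex $v\in C_i$ has the same degree $d(C_i)=\sum_j b_{ij}$, so $D$ acts as the scalar $d(C_i)$ on the coordinates indexed by $C_i$. Comparing entries once more, $(D^{-1}P)_{vi}=P_{vi}/d_v$ is nonzero only when $i=c(v)$, where it equals $1/d(C_i)$; and $(PD_\pi^{-1})_{vi}=P_{vi}/d(C_i)$ is nonzero under the same condition and takes the same value. Combining the two facts gives $\mathcal{A}P=D^{-1}AP=D^{-1}PB_\pi=PD_\pi^{-1}B_\pi=P\mathcal{B}_\pi$, as required.

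I do not expect a genuine obstacle here, since the argument is essentially bookkeeping with the characteristic matrix. The only point that needs care is that equitability is invoked twice: once to make $b_{c(v),i}$ well defined (independent of the chosen representative of the cell), and once to guarantee that $d(C_i)$ is a well-defined common degree, which is exactly what validates the commutation relation $D^{-1}P=PD_\pi^{-1}$.
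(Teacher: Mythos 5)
Your proof is correct, and it is organized differently from the paper's. The paper proves the identity in a single entrywise comparison of the \emph{normalized} matrices: it computes $(\mathcal{A}P)_{kj}=\frac{|\{v: v\sim k,\ v\in C_j\}|}{d_k}$, observes that for $k\in C_i$ this equals $(\mathcal{B}_\pi)_{ij}$ (equitability makes the count depend only on $i$, and constancy of degree on cells turns $1/d_k$ into $1/d(C_i)$), and then notes $(P\mathcal{B}_\pi)_{kj}=(\mathcal{B}_\pi)_{ij}$ because the $k$-th row of $P$ has its only nonzero entry in column $i$. You instead factor $\mathcal{A}=D^{-1}A$ and split the claim into two sub-identities, the classical unnormalized relation $AP=PB_\pi$ and the commutation $D^{-1}P=PD_\pi^{-1}$, each checked entrywise, and then chain $\mathcal{A}P=D^{-1}AP=D^{-1}PB_\pi=PD_\pi^{-1}B_\pi=P\mathcal{B}_\pi$. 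The underlying bookkeeping is the same in both arguments, but your modular route buys two things: it isolates exactly where each hypothesis enters (equitability for $AP=PB_\pi$; constant degree on cells for the commutation relation), and it reduces the normalized statement to the standard equitable-partition lemma for the adjacency matrix already available in the literature, so that the genuinely new content is only the one-line identity $D^{-1}P=PD_\pi^{-1}$. The paper's direct computation is shorter but fuses these two uses of the hypotheses into a single step.
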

\begin{proof}
We prove that both the matrices $\mathcal{A} P$ and $P\mathcal{B}_\pi$ have the same entries. The $(k,j)$th entry of $\mathcal{A} P$ is 
$$(\mathcal{A} P)_{kj}= \frac{|\lbrace v:v\sim k,v\in C_j\rbrace|}{d_k}.$$
If $k\in C_i$ then $(\mathcal{A} P)_{kj}=(\mathcal{B}_\pi)_{ij}$. Since $k\in C_i$, the only nonzero entry of $k$th row of $P$ is in $i$th column. Therefore $(P\mathcal{B}_\pi)_{kj}=(\mathcal{B}_\pi
)_{ij}$.
Hence, $\mathcal{A} P=P\mathcal{B}_\pi$.
\end{proof}
\begin{theorem}
\label{tldthm3} 
Let $\lambda$ be an eigenvalue of $\mathcal{B}_\pi$. Then $\lambda$ is  also an eigenvalue of $\mathcal{A}$.
\end{theorem}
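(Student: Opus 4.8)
The plan is to lift an eigenvector of the small matrix $\mathcal{B}_\pi$ to an eigenvector of $\mathcal{A}$ by means of the characteristic matrix $P$, using the intertwining relation established in Lemma \ref{tldlem2}. Concretely, I would begin by fixing an eigenvalue $\lambda$ of $\mathcal{B}_\pi$ and choosing a nonzero $m$-vector $v$ with $\mathcal{B}_\pi v=\lambda v$. The candidate eigenvector of $\mathcal{A}$ is then the $n$-vector $Pv$, which simply assigns to every vertex of $C_i$ the common value $v_i$.

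The central computation is short and uses Lemma \ref{tldlem2} directly. Applying $\mathcal{A}$ to $Pv$ and invoking $\mathcal{A}P=P\mathcal{B}_\pi$ gives
$$\mathcal{A}(Pv)=(\mathcal{A}P)v=(P\mathcal{B}_\pi)v=P(\mathcal{B}_\pi v)=P(\lambda v)=\lambda(Pv).$$
Thus $Pv$ is a formal eigenvector of $\mathcal{A}$ with eigenvalue $\lambda$, and the only thing that remains is to guarantee that $Pv$ is not the zero vector, since an eigenvector must by definition be nonzero.

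The one point that genuinely needs an argument, and which I expect to be the only real obstacle, is therefore the nonvanishing of $Pv$. Here I would appeal to the structure of the characteristic matrix: each column of $P$ is the indicator vector of one cell $C_i$ of the partition, and since every vertex lies in exactly one cell these columns have pairwise disjoint supports and are all nonzero. Consequently the columns of $P$ are linearly independent, so $P$ has full column rank $m$, and the only solution of $Pv=0$ is $v=0$. As $v$ was chosen to be a (nonzero) eigenvector of $\mathcal{B}_\pi$, we conclude $Pv\neq 0$, and hence $\lambda$ is an eigenvalue of $\mathcal{A}$, completing the proof.
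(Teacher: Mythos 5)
Your proposal is correct and takes essentially the same approach as the paper: both arguments lift an eigenvector $v$ of $\mathcal{B}_\pi$ to $Pv$ and invoke the intertwining relation $\mathcal{A}P=P\mathcal{B}_\pi$ of Lemma \ref{tldlem2} to conclude $\mathcal{A}Pv=\lambda Pv$. The only difference is that you explicitly verify $Pv\neq 0$ using the full column rank of $P$ --- a point the paper's proof leaves implicit --- which makes your write-up slightly more complete.
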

\begin{proof}
Let $x$ be a corresponding eigenvector of $\lambda$. Then $\mathcal{B}_\pi x=\lambda x$. Now, $\mathcal{A} Px=P\mathcal{B}_\pi x=\lambda Px$. Therefore $\lambda$ is an eigenvalue of $\mathcal{A}$.
\end{proof}

We have already seen that $\pi=\lbrace V_{s_1},V_{t_1},\ldots,V_{t_k}\rbrace$ is an equitable partition of $\Gamma$. We rename $\pi$ as $\pi=\lbrace C_1,C_2,\ldots,C_{2k}\rbrace$ where $C_i=V_{s_j}$, if $i=2j-1$, and  $C_i=V_{t_j}$, if $i=2j$. Let $B_{\pi}$ denote the partition  matrix for the partition $\pi$. The matrix $B_\pi=(b_{ij})$ is the square matrix of order $2k$ with $(i,j)$th entry as
$$b_{ij}=\begin{cases}
\vert C_j\vert,&\text{if }i<j,i\text{ odd, } j\text{ even or both }i,j\text{ even},\\
\vert C_i\vert-1,&\text{if }i=j,\ i\text{ even},\\
0,&\text{otherwise.}\\
\end{cases}$$
Thus, if $b=0^{s_1}1^{t_1}\ldots0^{s_k}1^{t_k}$ is the binary string of a threshold graph then
$$B_\pi=\left[\begin{array}{ccccc}
0&t_1&0&\cdots&t_k\\
s_1&t_1-1&0&\cdots&t_k\\
0&0&0&\cdots&t_k\\
\cdots&&\cdots&&\cdots\\
s_1&t_1&s_2&\cdots&t_k-1
\end{array}\right]$$
and $D_\pi=diag(t,t+S_1-1,t-t_1,t+S_2-1,\ldots,n-1)$.\\

Further, let $C_\pi$ be the diagonal matrix with $i$th diagonal equal to $|C_i|$. Since each vertex in a cell has {the same degree $d(C_i)$}, we consider the constants $r_1,r_2,\ldots,r_{2k}$, where $r_i=\frac{|C_i|}{d(C_i)}$, to construct the matrix $X=diag(r_1,r_2,\ldots,r_{2k})$. 

The matrix $\mathcal{B}_\pi$ can be written as $\mathcal{B}_\pi=D_\pi^{-1} A_\pi C_\pi$, where 
\begin{eqnarray*}
A_\pi&=&\left[\begin{array}{ccccc}
0&1&0&\cdots&1\\
1&\beta_1&0&\cdots&1\\
0&0&0&\cdots&1\\
\cdots&&\cdots&&\cdots\\
1&1&1&\cdots&\beta_k
\end{array}\right]\\
&=&A+diag(0,\beta_1,\ldots,0,\beta_k),
\end{eqnarray*}
where $A$ is the adjacency matrix for the threshold graph with binary string $\underbrace{01\ldots 01}_{2k}$ and $\beta_i=1-\frac{1}{|C_{2i}|}=1-\frac{1}{t_i}$.\\
\begin{theorem}
\label{tldthm5}
Let $\Gamma$ be a threshold graph with the binary string $0^{s_1}1^{t_1}\ldots0^{s_k}1^{t_k}$. If $\pi=\{C_1,C_2,\ldots,C_{2k}\}$ is the equitable partition of $\Gamma$, then the eigenvalues of $\mathcal{B}_\pi$ are simple.
\end{theorem}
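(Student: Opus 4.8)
The plan is to deduce simplicity from two facts: that $\mathcal{B}_\pi$ is diagonalizable with real spectrum, and that every eigenvalue has geometric multiplicity one. Together these force the $2k$ eigenvalues to be pairwise distinct. Diagonalizability is immediate from the factorization $\mathcal{B}_\pi=D_\pi^{-1}A_\pi C_\pi$ given just before the statement: conjugating by the positive diagonal matrix $P=(C_\pi D_\pi)^{1/2}$ yields $P\mathcal{B}_\pi P^{-1}=Q A_\pi Q$ with $Q=C_\pi^{1/2}D_\pi^{-1/2}$, and since $A_\pi$ is symmetric and $Q$ is diagonal, $QA_\pi Q$ is real symmetric and similar to $\mathcal{B}_\pi$. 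Hence the spectrum is real and geometric multiplicity equals algebraic multiplicity, so it remains only to bound the geometric multiplicity by one.

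To do that I would read the eigenequation $\mathcal{A}f=\lambda f$ on a cell-constant function. Writing $u_i$ for its value on $V_{s_i}$ and $w_i$ for its value on $V_{t_i}$, and setting $\delta_i=\sum_{j\ge i}t_j$ (the common degree in $V_{s_i}$) and $\gamma_i=t+S_i-1$ (the common degree in $V_{t_i}$), the two types of cells give $\lambda\delta_i u_i=\sum_{j\ge i}t_jw_j$ and $(\lambda\gamma_i+1)w_i=\sum_{j\le i}s_ju_j+\sum_j t_jw_j$. First, $\lambda=0$ is impossible: the first family telescopes to $t_iw_i=0$, whence $w_i=0$, and the second then forces $s_iu_i=0$, whence $u_i=0$. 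For $\lambda\neq0$ I would telescope consecutive equations to obtain the local relations $t_iw_i=\lambda\delta_iu_i-\lambda\delta_{i+1}u_{i+1}$ and $s_iu_i=(\lambda\gamma_i+1)w_i-(\lambda\gamma_{i-1}+1)w_{i-1}$, together with the boundary identities $(\lambda\gamma_1+1)w_1=(s_1+\lambda\delta_1)u_1$ and $\lambda\delta_ku_k=t_kw_k$.

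Because every $s_i,t_i\ge1$ and $\delta_i,\gamma_i>0$, these relations propagate forward: the scalar $u_1$ determines $w_1$, then $u_2$, then $w_2$, and so on, so that the entire eigenvector is a multiple of $u_1$, while the single remaining identity $\lambda\delta_ku_k=t_kw_k$ plays the role of the eigenvalue condition. This exhibits each eigenspace as one-dimensional. Equivalently, substituting the expression for $u_i$ into the first relation turns it into a three-term recurrence for the $w_i$ whose neighbour coefficients never vanish, so the eigenvalues appear as the roots of a Sturm/Jacobi-type determinant and are therefore simple.

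The \textbf{main obstacle} is that the forward step $u_i\mapsto w_i$ divides by $\lambda\gamma_i+1$, which vanishes precisely when $\lambda=-1/\gamma_i$. Here I would use that $\gamma_1<\gamma_2<\cdots<\gamma_k$ strictly (each $s_i\ge1$ makes $S_i$ strictly increasing), so for a fixed $\lambda$ at most one index $m$ is exceptional and the propagation breaks in a single place. At $i=m$ the relation degenerates into a constraint not involving $w_m$, leaving $w_m$ as a second apparent parameter; the delicate part is to run the recurrence inward from both ends toward $m$ and show, using the two leftover boundary identities, that the resulting homogeneous system in $(u_1,w_m)$ has rank at least one, so the eigenspace is still at most one-dimensional. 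I expect this rank computation, together with the endpoint subcases $m=1$ and $m=k$, to be the only genuinely technical point; the remainder is the routine forward substitution above. That this exceptional case truly occurs is shown already by $0\,1^{t}=K_{t+1}$, where $-1/\gamma_1=-1/t$ is an honest eigenvalue of $\mathcal{B}_\pi$ and is nonetheless simple.
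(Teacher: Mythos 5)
Your proposal follows the same skeleton as the paper's own proof: diagonalizability from the similarity of $\mathcal{B}_\pi$ to a real symmetric matrix (your $QA_\pi Q$ is exactly the paper's $X^{\frac{1}{2}}A_\pi X^{\frac{1}{2}}$, with $X=C_\pi D_\pi^{-1}$), followed by a cell-by-cell linear recursion meant to pin each eigenvector down to a one-parameter family; the paper runs this recursion backward from the last nonzero coordinate of an eigenvector (which must sit at an even position $2p$), while you run it forward from $u_1$. Your generic case ($\lambda\neq 0$ and $\lambda\gamma_i+1\neq 0$ for all $i$) is correct and complete. But the exceptional case $\lambda=-1/\gamma_m$ is a genuine gap, not a loose end: you state a plan (``run the recurrence inward from both ends and show the homogeneous system in $(u_1,w_m)$ has rank at least one'') without executing it, and that rank statement is the only hard content of the theorem. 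The case is not rare: whenever $s_1=1$ (in particular for every pineapple graph) the vector $[-t_1,1,0,\ldots,0]^T$ is an eigenvector of $\mathcal{B}_\pi$ with eigenvalue $-1/t=-1/\gamma_1$, consistent with Theorem \ref{tldthm4.1}(b), so the degenerate situation occurs for infinitely many threshold graphs with $k\geq 2$, not just for $K_n$. Closing it requires proving non-vanishing of specific polynomial expressions at $\lambda=-1/\gamma_m$ (already for the pineapple graph one must check $\lambda\delta_2(\lambda\gamma_2+1)-s_2t_2\neq 0$ at $\lambda=-1/t$), and nothing in your sketch does this for general $k$ and $m$. As written, the proof is therefore incomplete.

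You should know, however, that you have located precisely the step the paper glosses over. The paper's phrase ``iteratively, we get the constants $c_1,c_2,\ldots,c_{2p-1}$'' divides at each stage by $\lambda d(C_{2i})+1=\lambda\gamma_i+1$, with no argument that these quantities are nonzero; moreover its displayed relation between $x_{2p}$ and $x_{2p-2}$ is mis-stated (substituting (\ref{eq2}) into rows $2p$ and $2p-2$ of (\ref{eq1}) yields the coefficient $\lambda\gamma_p+1-s_pt_p/(\lambda\delta_p)$, not $\lambda\gamma_p+\lambda s_p+1$). A useful observation that localizes the difficulty in either orientation: if an eigenvector's last nonzero coordinate sits at position $2p$ with $p<k$, then rows $2p$ and $2p+2$ of $B_\pi x=\lambda D_\pi x$ force $(\lambda\gamma_p+1)w_p=0$, hence $\lambda=-1/\gamma_p$ exactly; since $\gamma_1<\cdots<\gamma_k$, the divisions by $\lambda\gamma_i+1$, $i<p$, are then automatically safe, and such an eigenvector is determined up to scale. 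Consequently the one configuration that remains to be excluded --- by you and by the paper alike --- is an eigenvector with eigenvalue $-1/\gamma_m$, $m<k$, whose support extends beyond position $2m$; that exclusion is exactly your unproven rank claim. So your proposal and the paper's printed proof break at the same point; yours at least says so explicitly.
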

\begin{proof}
Let $\lambda$ be an eigenvalue of $\mathcal{B}_\pi$ such that the multiplicity of $\lambda$ is $\delta>1$.
Since the matrix $\mathcal{B}_\pi$ is similar to the symmetric matrix $X^{\frac{1}{2}}A_\pi X^{\frac{1}{2}}$, $\mathcal{B}_\pi$ is diagonalizable. Thus, there exists a linearly independent set of $\delta$ eigenvectors 
corresponding to $\lambda$.
Let  $x=[x_1\ x_2\ \ldots\ x_{2k}]^T$ be an eigenvector corresponding to $\lambda$ such that $x_l\neq0$ and $x_m=0\ \forall m>l$, {where $l$ is maximal}. Then $l=2p$, $1\leq p\leq k$. The vector $x$ satisfies the equation
\begin{equation}
\label{eq1}
B_\pi x=\lambda D_\pi x.
\end{equation}
This implies that
\begin{equation}
\label{eq2}
t_p x_{2p}=\lambda d(C_{2p-1})x_{2p-1}.
\end{equation}
Now, using (\ref{eq2}) {along with the $(2p-2)$th and  the $2p$th} equations of (\ref{eq1}), we get
\begin{equation}
[\lambda d(C_{2p})+\lambda s_p+1]x_{2p}=[\lambda d(C_{2p-2})+1]x_{2p-2}.
\end{equation}
Now, iteratively, we get the constants $c_1,c_2,\dots,c_{2p-1}$, to construct $x$ as
$$x=x_{2p}[c_1\ c_2\ \ldots\ c_{2p-1}\ 1\ 0\ \ldots\ 0]^T.$$
Let $x'=[x_1'\ x_2'\ \ldots\ x_{2q}'\ 0\ \ldots\ 0]^T\neq x$ be a vector which satisfies (\ref{eq1}). Then $x+x'$ satisfies equation (\ref{eq1}). If $p=q$ then, by the above arguments we have that $x'$ is a constant multiple of $x$. Again, if $q<p$ then we have $x'=0$.
Then the geometric multiplicity of the eigenvalue $\lambda$ is 1. Hence the proof follows.
\end{proof}
\begin{theorem}
\label{tldthm6}
Let $0^{s_1}1^{t_1}\ldots0^{s_k}1^{t_k}$ be the binary string of a threshold graph $\Gamma$. Then
$$det(B_\pi)=(-1)^ks_1t_1s_2t_2\ldots s_kt_k,$$ and hence $$det(\mathcal{B}_\pi)=(-1)^kr_1r_2\ldots r_{2k}.$$
\end{theorem}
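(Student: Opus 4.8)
The plan is to exploit the factorization of $B_\pi$ that is already available rather than attack the $2k\times 2k$ determinant head-on. Since $\mathcal{B}_\pi=D_\pi^{-1}B_\pi$ and, as recorded just above the statement, $\mathcal{B}_\pi=D_\pi^{-1}A_\pi C_\pi$, these force $B_\pi=A_\pi C_\pi$, so $\det(B_\pi)=\det(A_\pi)\det(C_\pi)$. Here $C_\pi=\mathrm{diag}(|C_1|,|C_2|,\ldots,|C_{2k}|)=\mathrm{diag}(s_1,t_1,\ldots,s_k,t_k)$ is diagonal, hence $\det(C_\pi)=s_1t_1s_2t_2\cdots s_kt_k$. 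Thus the entire first formula reduces to the single clean identity $\det(A_\pi)=(-1)^k$.

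To evaluate $\det(A_\pi)$ I would first show that the parameters $\beta_1,\ldots,\beta_k$ are irrelevant to it. Recall $A_\pi=A+\mathrm{diag}(0,\beta_1,0,\beta_2,\ldots,0,\beta_k)$, so $\beta_p$ appears only in the $(2p,2p)$ entry; expanding $\det(A_\pi)$ along row $2p$, the one term containing $\beta_p$ is $\beta_p M_{2p,2p}$, where $M_{2p,2p}$ is the minor obtained by deleting row and column $2p$. The key step is that every such minor vanishes. Using $A_{mm'}=1$ exactly when $\max(m,m')$ is even, one checks that after deleting column $2p$ the row indexed by the preceding $0$-cell $C_{2p-1}$ and the row indexed by the following $0$-cell $C_{2p+1}$ become identical: both have a $1$ in precisely the columns $2p+2,2p+4,\ldots,2k$ and $0$ elsewhere. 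Hence for $p<k$ the minor has two equal rows, while for $p=k$ the row $C_{2k-1}$ becomes the zero row; in either case $M_{2p,2p}=0$, so $\det(A_\pi)$ is constant in each $\beta_p$.

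Setting every $\beta_p=0$ then gives $\det(A_\pi)=\det(A)$, where $A$ is the adjacency matrix of the threshold graph with string $\underbrace{01\cdots01}_{2k}$. For that graph all $s_i=t_i=1$ and $t=k$, so Theorem 5 of \cite{Ba} yields $\det(A)=(-1)^t t_1\cdots t_k=(-1)^k$. Combining the pieces, $\det(B_\pi)=(-1)^k s_1t_1\cdots s_kt_k$.

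For the second formula I would simply divide by $\det(D_\pi)$. From $\mathcal{B}_\pi=D_\pi^{-1}B_\pi$ we get $\det(\mathcal{B}_\pi)=\det(B_\pi)/\det(D_\pi)$, and since $D_\pi=\mathrm{diag}(d(C_1),\ldots,d(C_{2k}))$ we have $\det(D_\pi)=\prod_i d(C_i)$. Recalling $r_i=|C_i|/d(C_i)$,
\[
\det(\mathcal{B}_\pi)=\frac{(-1)^k\prod_i|C_i|}{\prod_i d(C_i)}=(-1)^k\prod_{i=1}^{2k}\frac{|C_i|}{d(C_i)}=(-1)^k r_1 r_2\cdots r_{2k},
\]
as claimed. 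The only genuinely non-routine step is verifying the vanishing of the $(2p,2p)$ minors of $A_\pi$; once the factorization $B_\pi=A_\pi C_\pi$ is in hand, everything else is bookkeeping.
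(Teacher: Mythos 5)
Your proof is correct, and it follows the paper's skeleton up to the decisive step: like the paper, you use the factorization $B_\pi=A_\pi C_\pi$ to reduce both formulas to the single identity $\det(A_\pi)=(-1)^k$, and you obtain $\det(\mathcal{B}_\pi)$ by dividing by $\det(D_\pi)=\prod_i d(C_i)$. The difference is in how that identity is established. The paper keeps the parameters $\beta_i$ and performs (only sketched) row and column operations reducing $A_\pi$ to the block-diagonal matrix $\mathrm{diag}(B_1,\ldots,B_k)$ with $B_i=\begin{pmatrix}0&1\\1&\beta_i\end{pmatrix}$, whence $\det(A_\pi)=\prod_i\det(B_i)=(-1)^k$, each block having determinant $-1$ regardless of $\beta_i$. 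You instead eliminate the $\beta$'s first: your cofactor argument that every $(2p,2p)$ minor vanishes (two equal rows coming from the odd-indexed cells $C_{2p-1}$ and $C_{2p+1}$ when $p<k$, a zero row when $p=k$) is a correct and complete verification that $\det(A_\pi)$ is independent of all the $\beta$'s, and you then evaluate at $\beta=0$ by citing Bapat's determinant formula (Theorem 5 of \cite{Ba}) for the string $01\cdots01$, which gives $(-1)^t t_1\cdots t_k=(-1)^k$ since there $t=k$ and all $t_i=1$. What each route buys: the paper's is self-contained but its key elimination is asserted rather than carried out, while yours is fully checkable step by step at the cost of importing an external result --- though it is one the paper already relies on in Theorem \ref{tldthm2}, and you only need it in the trivial case $s_i=t_i=1$, which could also be verified by a short direct induction if you wanted to keep the argument entirely self-contained.
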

\begin{proof}
We have,
 $B_\pi=A_\pi C_\pi$ and $\mathcal{B}_\pi=D_\pi^{-1} A_\pi C_\pi\sim XA_\pi$, where
 \begin{eqnarray*}
A_\pi&=&\left[\begin{array}{ccccc}
0&1&0&\cdots&1\\
1&\beta_1&0&\cdots&1\\
0&0&0&\cdots&1\\
\cdots&&\cdots&&\cdots\\
1&1&1&\cdots&\beta_k
\end{array}\right],
\end{eqnarray*}
with $\beta_i=1-\frac{1}{|C_{2i}|}=1-\frac{1}{t_i}$.\\
Now we perform some step by step row and column operations to reduce $A_\pi$ into block diagonal form
$$diag(B_1,B_2,\ldots,B_k),$$
 where $B_{i}=\left[\begin{array}{cc}
0&1\\
1&\beta_i
\end{array}\right].$
Hence, $det(A_\pi)=det(B_1)det(B_2)\ldots det(B_k)=(-1)^k$.
Therefore, $$det(B_\pi)=(-1)^k|C_1||C_2|\ldots |C_{2k}|=(-1)^ks_1t_1s_2t_2\ldots s_kt_k, \  \text{ and } \ det(\mathcal{B}_\pi)=(-1)^kr_1r_2\ldots r_{2k}.$$
Hence the result follows.
\end{proof}
\begin{theorem}
\label{tldthm7}
Let $\Gamma$ be a threshold graph. If $\sigma(\mathcal{A})$ and $\sigma(\mathcal{B}_\pi)$ are the spectrum of $\mathcal{A}$ and $\mathcal{B}_\pi$ respectively, then
$$\sigma(\mathcal{A})=\sigma(\mathcal{B}_\pi)\cup T.$$
\end{theorem}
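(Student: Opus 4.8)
The plan is to prove the two inclusions $\sigma(\mathcal{B}_\pi)\cup T\subseteq\sigma(\mathcal{A})$ and $\sigma(\mathcal{A})\subseteq\sigma(\mathcal{B}_\pi)\cup T$ separately, the second one by exhibiting a complete eigenbasis of $\mathcal{A}$. The forward inclusion is immediate from results already established: Theorem \ref{tldthm3} gives $\sigma(\mathcal{B}_\pi)\subseteq\sigma(\mathcal{A})$, while Lemma \ref{tldlem1} together with the Remark shows that every element of $T$ is an eigenvalue of $\mathcal{A}$.

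For the reverse inclusion I would count eigenvectors. First, collect the vectors produced directly from the string: the $f_{t_i}^{(j)}$ of Lemma \ref{tldlem1} for the indices with $t_i>1$, and the $f_{s_i}^{(j)}$ of the Remark for the indices with $s_i>1$. Their total number is $\sum_i(t_i-1)+\sum_i(s_i-1)=(t-k)+(s-k)=n-2k$. These vectors have pairwise disjoint supports across distinct cells, and Lemma \ref{tldlem1} shows they are mutually orthogonal within a cell, so the whole family is linearly independent.

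The key structural observation is that each of these $n-2k$ vectors is orthogonal to the range of the characteristic matrix $P$. Indeed, every such vector is supported on a single cell $C$ and satisfies $\sum_{x\in C}f(x)=0$; since all vertices of $C$ share a common degree $d(C)$, the weighted inner product of $f$ with the indicator column $\mathbf{1}_{C'}$ of any cell equals $d(C)\sum_{x\in C}f(x)=0$ when $C'=C$, and is $0$ otherwise. Hence this family lies in the orthogonal complement of the column space of $P$, which has dimension exactly $n-2k$, so it is a basis of that complement. On the other side, by Theorem \ref{tldthm5} the $2k$ eigenvalues of $\mathcal{B}_\pi$ are simple, giving $2k$ independent eigenvectors $x$; by Lemma \ref{tldlem2} each lifts to an eigenvector $Px$ of $\mathcal{A}$, and since $P$ has full column rank these lifts are independent and span the $2k$-dimensional column space of $P$.

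Putting the two families together yields $n$ linearly independent eigenvectors of $\mathcal{A}$ (they remain independent even when an eigenvalue of $\mathcal{B}_\pi$ coincides with one in $T$, since the two families sit in complementary subspaces). Because $\mathcal{A}$ is similar to the symmetric Randi\'c matrix $R$, it is diagonalizable with exactly $n$ eigenvalues counted with multiplicity; therefore these eigenvectors form a complete eigenbasis, and every eigenvalue of $\mathcal{A}$ is either an eigenvalue of $\mathcal{B}_\pi$ or lies in $T$. This gives $\sigma(\mathcal{A})\subseteq\sigma(\mathcal{B}_\pi)\cup T$ and, combined with the forward inclusion, the desired equality. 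I expect the main obstacle to be the orthogonality step: one must verify carefully that the string-eigenvectors sum to zero on their cell and then exploit the constancy of the degree within each cell, which is exactly what forces these vectors into the complement of $\operatorname{range}(P)$ and makes the dimension bookkeeping $n-2k+2k=n$ close up.
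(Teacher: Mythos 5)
Your proof is correct and takes essentially the same route as the paper's: both rest on the orthogonality, in the degree-weighted inner product, between the string-eigenvectors $f_{t_i}^{(j)}, f_{s_i}^{(j)}$ and the lifted eigenvectors $Pg$, which are constant on the cells of $\pi$. The paper's version is terser---it states this orthogonality and concludes directly---whereas you make explicit the dimension bookkeeping ($n-2k$ independent vectors in $\operatorname{range}(P)^{\perp}$ plus $2k$ lifted eigenvectors spanning $\operatorname{range}(P)$, with diagonalizability of $\mathcal{A}$ closing the count), including the case where an eigenvalue of $\mathcal{B}_\pi$ coincides with one in $T$; this is precisely the argument the paper leaves implicit.
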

\begin{proof}
We have, $\sigma(\mathcal{A})\supseteq\sigma(\mathcal{B}_\pi)\cup T.$ 
Let $F$ be the set of eigenvectors of the form $f_{t_i}^{(j)},\ 2\leq j\leq t_i, t_i>1$ or $f_{s_i}^{(j)},\ 2\leq j\leq s_i, s_i>1$. Suppose $\lambda$,  which is an eigenvalue of $\mathcal{A}$, is also an eigenvalue of $\mathcal{B}_\pi$. {Let $g$ be an eigenvector corresponding to the eigenvalue $\lambda$ of $\mathcal{B}_\pi$. Then $Pg$ is an eigenvector corresponding to $\lambda$ for the matrix $\mathcal{A}$ and it takes constant value on each cell of $\pi$. Thus, 
$$\langle f,Pg\rangle=0,\ \text{for any } f\in F.$$}
Hence $\sigma(\mathcal{A})=\sigma(\mathcal{B}_\pi)\cup T.$
\end{proof}
\begin{lemma}\cite{Cve1}
\label{tldlem2.3}
Let A be a real matrix such that its row sums are constant (i.e. $Ae=\rho e$ for some $\rho\in \mathbb{R}$). Then all eigenvalues of A, different from $\rho$, are also eigenvalues of any matrix of the form
$$B=A-(ee^T)diag(c_1,c_2\ldots,c_n),$$
where $c_1,c_2,\ldots,c_n\in\mathbb{R}.$
\end{lemma}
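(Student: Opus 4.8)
The plan is to treat $B$ as a rank-one perturbation of $A$ and to exploit the fact that $e$ is a right eigenvector of $A$. First I would observe that $ee^T\,\mathrm{diag}(c_1,\ldots,c_n) = e w^T$, where $w = [c_1\ c_2\ \cdots\ c_n]^T$, since the $(i,j)$ entry of $ee^T\,\mathrm{diag}(c_1,\ldots,c_n)$ equals $c_j$. Thus $B = A - e w^T$ is obtained from $A$ by subtracting a matrix of rank at most one.

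Next I would compute the characteristic polynomial of $B$ via the matrix determinant lemma. For any scalar $\mu$ that is not an eigenvalue of $A$, the matrix $A-\mu I$ is invertible and
$$\det(B-\mu I) = \det\big((A-\mu I) - e w^T\big) = \det(A-\mu I)\big(1 - w^T (A-\mu I)^{-1} e\big).$$
The crucial simplification comes from the hypothesis $Ae = \rho e$: it gives $(A-\mu I)e = (\rho-\mu)e$, hence $(A-\mu I)^{-1} e = (\rho-\mu)^{-1} e$, and therefore $w^T (A-\mu I)^{-1} e = \sigma/(\rho-\mu)$, where $\sigma := w^T e = \sum_{j} c_j$. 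Substituting and clearing the denominator yields the identity
$$(\rho-\mu)\det(B-\mu I) = (\rho-\mu-\sigma)\det(A-\mu I),$$
which a priori holds only on the cofinite set of $\mu$ that are not eigenvalues of $A$, but, both sides being polynomials in $\mu$ agreeing on infinitely many points, holds identically.

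Finally I would read off the conclusion. Let $\lambda\neq\rho$ be an eigenvalue of $A$, so $\det(A-\lambda I)=0$. Evaluating the identity at $\mu=\lambda$ gives $(\rho-\lambda)\det(B-\lambda I)=0$, and since $\rho-\lambda\neq 0$ we obtain $\det(B-\lambda I)=0$; that is, $\lambda$ is an eigenvalue of $B$, as required.

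I expect the one point needing genuine care to be the passage from the rational identity (valid only where $A-\mu I$ is invertible) to an honest polynomial identity, together with a clean invocation of the matrix determinant lemma; the eigenvector hypothesis $Ae=\rho e$ does all the real work by collapsing the quadratic form $w^T(A-\mu I)^{-1}e$ into the explicit scalar $\sigma/(\rho-\mu)$. A more hands-on alternative would be to build an eigenvector of $B$ of the form $v+\alpha e$ from an eigenvector $v$ of $A$ with eigenvalue $\lambda$; solving for $\alpha$ succeeds whenever $\lambda\neq\rho-\sigma$, but it breaks down exactly when $\lambda=\rho-\sigma$ and $w^T v\neq 0$, an edge case that the determinant argument handles automatically, which is why I would favour the determinant route.
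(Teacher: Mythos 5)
Your proof is correct, but there is nothing in the paper to compare it against: the paper states this lemma with a citation to \cite{Cve1} (Cvetkovi\'c, Kosti\'c and Pe\~na) and gives no proof at all, so your write-up supplies an argument the paper omits. For the record, the standard proof of this fact is shorter than yours: given an eigenvalue $\lambda\neq\rho$ of $A$, take a left eigenvector, $y^TA=\lambda y^T$ with $y\neq 0$; then $y^TAe=\lambda y^Te$ and also $y^TAe=\rho y^Te$, so $\lambda\neq\rho$ forces $y^Te=0$, and hence $y^TB=y^TA-(y^Te)w^T=\lambda y^T$, i.e.\ $\lambda$ is an eigenvalue of $B$. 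Your determinant route costs more machinery (the matrix determinant lemma plus the passage from a rational identity to a polynomial one, which you handle correctly) but it also buys more: the identity $(\rho-\mu)\det(B-\mu I)=(\rho-\mu-\sigma)\det(A-\mu I)$ determines the full characteristic polynomial of $B$, hence algebraic multiplicities, and shows exactly where the remaining eigenvalue goes (the eigenvalue $\rho$ of $A$ is replaced by $\rho-\sigma$ in $B$), none of which the left-eigenvector argument sees. Your diagnosis of the edge case $\lambda=\rho-\sigma$ with $w^Tv\neq 0$ in the naive right-eigenvector construction is also accurate; passing to left eigenvectors is precisely how one sidesteps it without determinants.
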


\begin{theorem}
\label{tldthm9}
Let $0^{s_1}1^{t_1}\ldots0^{s_k}1^{t_k}$ be the binary string of a threshold graph $\Gamma$. If $\lambda_1$ is the smallest eigenvalue of the normalized adjacency matrix of $\Gamma$, then  $$-\frac{n-t_k}{n-1}\leq\lambda_1\leq-\frac{1}{t+s_1-1}.$$
\end{theorem}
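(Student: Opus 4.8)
The plan is to treat both inequalities through the variational description of the extreme eigenvalue. Since $\mathcal{A}$ is self-adjoint for $\langle u,v\rangle=\sum_i d_iu(i)v(i)$ and has the same spectrum as the symmetric Randi\'c matrix $R$, the smallest eigenvalue is $\lambda_1=\min_{f\neq 0}\frac{\langle\mathcal{A}f,f\rangle}{\langle f,f\rangle}=\min_{f\neq 0}\frac{f^TAf}{f^TDf}$. Both bounds are produced by analyzing this Rayleigh quotient.

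For the upper bound I would test the quotient on vectors supported on the first two cells, i.e. $f=\alpha$ on $V_{s_1}$, $f=\beta$ on $V_{t_1}$ and $0$ elsewhere. Using that $V_{s_1}$ is independent, $V_{t_1}$ is a clique, and the two are completely joined, one gets $f^TAf=2s_1t_1\alpha\beta+t_1(t_1-1)\beta^2$ and $f^TDf=s_1t\,\alpha^2+t_1(t+s_1-1)\beta^2$ (the degrees being those recorded in Theorem \ref{tldthm2}). Minimizing over $(\alpha,\beta)$ is a $2\times2$ generalized eigenvalue problem whose characteristic quadratic is $t(t+s_1-1)\mu^2-t(t_1-1)\mu-s_1t_1=0$; its two roots have negative product, so the smaller one $\mu_{\min}$ is negative and, being the minimum of the quotient over a subspace, satisfies $\lambda_1\le\mu_{\min}$. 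Evaluating the quadratic at $\mu_0=-\frac{1}{t+s_1-1}$ gives the value $t_1\bigl(\frac{t}{t+s_1-1}-s_1\bigr)\le 0$; since the parabola opens upward this forces $\mu_0\in[\mu_{\min},\mu_{\max}]$, hence $\lambda_1\le\mu_{\min}\le-\frac{1}{t+s_1-1}$ (with equality already occurring when $s_1=1$).

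For the lower bound I would first rewrite $\lambda_1\ge-\frac{n-t_k}{n-1}$ in the equivalent positive-semidefinite form $A+\frac{n-t_k}{n-1}D\succeq 0$, which is immediate from the Rayleigh description. Put $m=n-t_k$ and let $W=V_{t_k}$ be the block of $t_k$ dominating vertices; each has degree $n-1$, $W$ is a clique, and $W$ is completely joined to the remaining set $U$ of $m$ vertices. In the $(U,W)$ block form the $W$-block of $M:=A+\frac{m}{n-1}D$ is $J_{t_k}+(m-1)I_{t_k}$, which is positive definite for $m\ge2$, so $M\succeq0$ is equivalent to positivity of its Schur complement. A short computation (using that every vertex of $U$ is adjacent to all of $W$) collapses the resulting rank-one correction and yields the symmetric matrix $S=A_U+\frac{m}{n-1}D_U-\frac{t_k}{n-1}J_m$, where $A_U$ and $D_U$ are the adjacency and ambient-degree matrices restricted to $U$.

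The heart of the argument, and the step I expect to be the main obstacle, is showing $S\succeq0$. I would do this by proving that $S$ is weakly diagonally dominant with nonnegative diagonal, whence PSD-ness follows. Its diagonal entry $\frac{md_i-t_k}{n-1}$ is nonnegative, and after counting the $d_i-t_k$ neighbours and $n-1-d_i$ non-neighbours of a vertex $i$ inside $U$, the dominance inequality $S_{ii}\ge\sum_{j\ne i}|S_{ij}|$ reduces exactly to $(1+t_k)(d_i-t_k)\ge0$, which holds because each vertex of $U$ sees all $t_k$ dominating vertices, i.e. $d_i\ge t_k$. The only loose ends are the degenerate case $m=1$ (the string $01^{\,n-1}$, namely $K_n$), where one checks directly that $A+\frac{1}{n-1}D=J\succeq0$ and the bound is attained, and the routine bookkeeping in passing between the ordinary and the $D$-weighted inner products. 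I would also note that this route bounds the entire spectrum at once, so no separate treatment of the eigenvalues in $T$ versus those of $\mathcal{B}_\pi$ is needed.
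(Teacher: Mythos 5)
Your proof is correct, and it takes a genuinely different route from the paper's. The paper never touches the full matrices $A$ and $D$: it reduces everything to the $2k\times 2k$ quotient matrix $\mathcal{B}_\pi$, obtains the lower bound by subtracting the rank-one matrix $ef^T$ from the stochastic matrix $\mathcal{B}_\pi$ (Lemma \ref{tldlem2.3}) and applying the Ger\v{s}gorin disk theorem to the resulting matrix, and obtains the upper bound by Cauchy interlacing with the $2\times 2$ principal submatrix on one vertex from $C_1$ and one from $C_2$ (whose eigenvalues are $\pm 1/\sqrt{t(t+s_1-1)}$). You instead argue entirely through the Rayleigh quotient $f^TAf/f^TDf$. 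Your upper bound --- compression onto the span of the indicator vectors of $C_1$ and $C_2$, then checking the sign of the characteristic quadratic at $-\tfrac{1}{t+s_1-1}$ --- is the same interlacing idea in a coarser, cell-level form; the quadratic $t(t+s_1-1)\mu^2-t(t_1-1)\mu-s_1t_1$ and the value $t_1\bigl(\tfrac{t}{t+s_1-1}-s_1\bigr)\le 0$ at the test point both check out. Your lower bound is where the proofs really diverge: recasting $\lambda_1\ge -\tfrac{n-t_k}{n-1}$ as $A+\tfrac{n-t_k}{n-1}D\succeq 0$, taking the Schur complement with respect to the dominating block, and verifying diagonal dominance of $S=A_U+\tfrac{m}{n-1}D_U-\tfrac{t_k}{n-1}J_m$; your reduction of the dominance inequality to $(1+t_k)(d_i-t_k)\ge 0$ is exactly right, as is the separate $m=1$ case ($K_n$). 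What your route buys: it is self-contained (no equitable-partition machinery, no external localization lemma, no need for Theorem \ref{tldthm7} to transfer bounds from $\mathcal{B}_\pi$ back to $\mathcal{A}$), it bounds the entire spectrum at once, and it sidesteps the symmetrization bookkeeping implicit in applying interlacing to the non-symmetric $\mathcal{A}$. What the paper's route buys: given the machinery already in place, both bounds are short consequences of known results, and all computation happens in a $2k\times 2k$ matrix rather than an $(n-t_k)\times(n-t_k)$ Schur complement.
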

\begin{proof}
It is sufficient to prove the theorem for the matrix $\mathcal{B}_\pi$. Consider $B=\mathcal{B}_\pi-ef^T$, where $e$ is the column vector with all entries equal to 1 and $f$ is any real column vector. Since $\mathcal{B}_\pi$ is stochastic, by Lemma \ref{tldlem2.3}, any eigenvalue $\lambda$ of $\mathcal{B}_\pi$, other than 1, is also an eigenvalue of $B$.  
Now, we choose $f=[f_i]$ where  
$$f_i=\begin{cases}\frac{t_k-1}{n-1},&\text{if } i=k,\\
0,&\text{otherwise.}
\end{cases}$$
By Ger{\v s}gorin disk theorem, we have
\begin{eqnarray*}
|\lambda_1|&\leq& 1-\frac{t_k-1}{n-1}\\
&=&\frac{n-t_k}{n-1}.
\end{eqnarray*}
Therefore, the smallest eigenvalue of $\mathcal{A}(\Gamma)$ is bounded below by $-\frac{n-t_k}{n-1}$.\\

Now we consider the $2\times2$ principle submatrix $\mathcal{B}^\ast$ of  $\mathcal{A}$ by taking $i$th and $j$th row such that $i\in C_1$ and $j\in C_2$. Then $$\mathcal{B}^\ast=\left[\begin{array}{cc}
0&\frac{1}{t}\\
\frac{1}{t+s_1-1}&0
\end{array}\right].$$ 
By Cauchy interlacing theorem we have
$$\lambda_1\leq -\frac{1}{t+s_1-1}.$$
Hence the theorem follows.
\end{proof}
\begin{example}{Let $\Gamma$ be a threshold graph with the binary string $b=0011100011$.  We have
$$\mathcal{B}_\pi=\left[\begin{array}{ccccccccc}
0&\frac{3}{5}&0&\frac{2}{5}\\
\frac{1}{3}&\frac{1}{3}&0&\frac{1}{3}\\
0&0&0&1\\
\frac{2}{9}&\frac{3}{9}&\frac{3}{9}&\frac{1}{9}
\end{array}\right].$$

The eigenvalues of $\mathcal{B}_\pi$ are $-0.6063$, $-0.3072$, $0.3579$, $1$.

Again, by Lemma \ref{tldlem1}, $0$, $-0.2$ and $-0.1111$ are the eigenvalues of $\mathcal{A}$ with the multiplicitties 3, 2 and 1, respectively.

Thus, the complete spectrum of $\mathcal{A}$ is $-0.6063$, $-0.3072$, $-0.2^2$, $-0.1111$, $0^3$, $0.3579$, $1$.}
\end{example}
\section{The normalized adjacency energy of threshold graphs}

The \textit{normalized adjacency energy} of a graph $\Gamma$ is defined by 
$$E_\mathcal{A}(\Gamma)=\sum |\lambda|.$$
It has been seen that matrix energy of a graph has importance in spectral graph theory and chemical graph theory. The normalized adjacency energy is equal to its normalized Laplacian energy $E_\mathcal{L} (\Gamma)$. Researchers found bounds for normalized adjacency energy in terms of general Randi\'c index \cite{Mcav} of that graph. The general Randi\'c index $R_{-1}(\Gamma)$ is defined by 
$$R_{-1}(\Gamma)=\sum_{i\sim j}\frac{1}{d_id_j}.$$ The general Randi\'c index $R_{-1}(\Gamma)$ for a threshold graph $\Gamma$ can be calculated explicitly.
\begin{theorem}
Let $0^{s_1}1^{t_1}\cdots 0^{s_k}1^{t_k}$ be the binary string of a threshold graph $\Gamma$. Then
$$R_{-1}(\Gamma)=\sum_{i=1}^{k-1}\Big{[}(r_{2i}+r_{2i+1})\sum_{j=i+1}^kr_{2j}\Big{]}+ \sum_{i=1}^k\frac{\alpha_i}{(t+S_i-1)^2},$$
where $\alpha_i=\begin{cases}
{t_i \choose 2},& \text{if }t_i>1,\\
0,& \text{if }t_i=1.
\end{cases}$
\end{theorem}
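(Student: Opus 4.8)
The plan is to evaluate $R_{-1}(\Gamma)=\sum_{i\sim j}\frac{1}{d_id_j}$ directly from the equitable partition $\pi=\{C_1,\ldots,C_{2k}\}$, using the fact that every vertex of a cell $C_a$ has the common degree $d(C_a)$, so that the weight $\frac{1}{d_id_j}$ of an edge depends only on the two cells containing its endpoints. I would split the edge set of $\Gamma$ into edges joining two distinct cells and edges lying inside a single cell, and treat the two totals separately. The basic building block is that, since $\pi$ is equitable, two distinct cells $C_a$ and $C_b$ are either completely joined or completely non-adjacent; a completely joined pair carries $|C_a|\,|C_b|$ edges of common weight $\frac{1}{d(C_a)d(C_b)}$ and hence contributes $\frac{|C_a||C_b|}{d(C_a)d(C_b)}=r_ar_b$, where $r_a=\frac{|C_a|}{d(C_a)}$ is exactly the quantity used to build $X$.

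First I would read off the adjacency pattern among the cells from the threshold construction, or equivalently from the displayed entries of $B_\pi$: a dominating class dominates every cell created before it, while two isolated classes are never adjacent. Thus $C_a$ and $C_b$ with $a<b$ are completely joined precisely when $b$ is even, i.e.\ when $C_b$ is one of the dominating classes $V_{t_j}$. Summing $r_ar_b$ over all such pairs gives the inter-cell contribution $\sum_{j=1}^{k}r_{2j}\bigl(r_1+r_2+\cdots+r_{2j-1}\bigr)$. I would then rearrange this, by interchanging the order of summation and grouping the cells that precede each dominating class $C_{2j}$, to match the displayed double sum $\sum_{i=1}^{k-1}(r_{2i}+r_{2i+1})\sum_{j=i+1}^{k}r_{2j}$.

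For the within-cell edges only the dominating classes contribute, since each $V_{t_i}$ induces a clique on its $t_i$ vertices while each isolated class $V_{s_i}$ induces no edge at all. The clique on $V_{t_i}$ has $\binom{t_i}{2}$ edges, each of weight $\frac{1}{(t+S_i-1)^2}$ because $d(C_{2i})=t+S_i-1$, so these edges contribute exactly $\sum_{i=1}^{k}\frac{\alpha_i}{(t+S_i-1)^2}$ with $\alpha_i=\binom{t_i}{2}$ when $t_i>1$ and $\alpha_i=0$ otherwise. Adding the inter-cell and intra-cell totals then yields the asserted identity.

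The step I expect to be the main obstacle is the rearrangement in the second paragraph: I must verify that the displayed grouping $(r_{2i}+r_{2i+1})$ accounts for every adjacent pair of cells exactly once, with no pair omitted and none double counted. The delicate case is the edges incident with the first isolated class $C_1=V_{s_1}$, which is joined to \emph{every} dominating class $C_{2j}$; tracking these contributions carefully through the regrouping is the crux, whereas the cell degrees $d(C_{2i-1})$ and $d(C_{2i})=t+S_i-1$ together with the clique counts are otherwise routine.
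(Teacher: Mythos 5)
Your decomposition is the same as the paper's (intra-cell clique edges plus completely-joined cell pairs), and your computations are correct up to the very last step: the intra-cell total is indeed $\sum_{i=1}^{k}\frac{\alpha_i}{(t+S_i-1)^2}$, and the inter-cell total is indeed $\sum_{j=1}^{k} r_{2j}\,(r_1+r_2+\cdots+r_{2j-1})$. (A minor caveat: ``completely joined or completely non-adjacent'' is not a consequence of equitability in general; it follows here from the threshold construction, which you do invoke correctly.) But the regrouping you deferred as the crux is exactly where the argument breaks, and it breaks irreparably: pairing $r_{2i}$ with $r_{2i+1}$ for $i=1,\ldots,j-1$ accounts only for the cells $C_2,\ldots,C_{2j-1}$ preceding $C_{2j}$, so
$$\sum_{j=1}^{k} r_{2j}\sum_{a=1}^{2j-1} r_a \;=\; r_1\sum_{j=1}^{k} r_{2j} \;+\; \sum_{i=1}^{k-1}\Big[(r_{2i}+r_{2i+1})\sum_{j=i+1}^{k} r_{2j}\Big],$$
and the leftover term $r_1\sum_{j=1}^{k} r_{2j}$ is never zero. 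Consequently the identity in the statement is false as printed. A quick check: for $K_3$ (string $011$, so $k=1$, $s_1=1$, $t_1=2$, $t=2$) the right-hand side of the theorem is $\binom{2}{2}/(2+1-1)^2=\frac{1}{4}$, whereas $R_{-1}(K_3)=3\cdot\frac{1}{4}=\frac{3}{4}$; the discrepancy is exactly $r_1r_2=\frac{1}{2}$.

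You were right to single out the edges incident with $C_1=V_{s_1}$: that cell is joined to every dominating class, and its contribution $r_1\sum_{j=1}^{k} r_{2j}$ cannot be absorbed into the displayed grouping. For what it is worth, the paper's own proof commits precisely this slip: it first writes $R_{-1}''(\Gamma)=r_1\sum_{i=1}^k r_{2i}+r_2\sum_{i=2}^k r_{2i}+\cdots$ (correct), and then regroups this as $\sum_{i=1}^{k-1}(r_{2i}+r_{2i+1})\sum_{j=i+1}^{k} r_{2j}$, silently dropping the leading term. The corrected statement, with which your argument goes through verbatim, is
$$R_{-1}(\Gamma) \;=\; r_1\sum_{j=1}^{k} r_{2j} \;+\; \sum_{i=1}^{k-1}\Big[(r_{2i}+r_{2i+1})\sum_{j=i+1}^{k} r_{2j}\Big] \;+\; \sum_{i=1}^{k}\frac{\alpha_i}{(t+S_i-1)^2}.$$
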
 
\begin{proof}
We have 
\begin{eqnarray*}
R_{-1}(\Gamma)&=&\sum_{i\sim j}\frac{1}{d_id_j}\\
&=&\sum_{\substack{i\sim j\\ i,j\in C_l}}\frac{1}{d_id_j}+\sum_{\substack{i\sim j\\ i\in C_m,j\in C_l\\ m\neq l}}\frac{1}{d_id_j}\\\
&=&R_{-1}'(\Gamma)+R_{-1}''(\Gamma).
\end{eqnarray*}
Since, for any edge $e=(ij)$, when the endvertices  $i$ and $j$ {are in the same cell}  $C_l$, $l$ is even and $|c_l|>1$. So, 
$$R_{-1}'(\Gamma)=\sum_{i=1}^k\frac{\alpha_i}{(t+S_i-1)^2}.$$ 
Now,
\begin{equation}
\label{tldeqn4}
R_{-1}''(\Gamma)=\sum_{\substack{i\sim j\\ i\in C_1,j\in C_m\\ m>1}}\frac{1}{d_id_j}
+\sum_{\substack{i\sim j\\ i\in C_2,j\in C_m\\ m>2}}\frac{1}{d_id_j}+\cdots+\sum_{\substack{i\sim j\\ i\in C_{2k-1},j\in C_{2k}}}\frac{1}{d_id_j}.
\end{equation}
The first term of this equation is equal to $\frac{s_1}{t}\sum_{i=1}^k \frac{t_i}{t+S_i-1}$, the second term is equal to $\frac{t_1}{t+S_1-1}\sum_{i=2}^k \frac{t_i}{t+S_i-1}$, and so on. Therefore,
\begin{eqnarray*}
R_{-1}''(\Gamma)&=&r_1\sum_{i=1}^kr_{2i}+r_2\sum_{i=2}^kr_{2i} +\cdots+r_{2k-1}r_{2i}\\
&=&\sum_{i=1}^{k-1}\Big{[}(r_{2i}+r_{2i+1})\sum_{j=i+1}^kr_{2j}\Big{]}.
\end{eqnarray*}
Hence the result follows.
\end{proof}
\begin{theorem}
\label{tldthm10}
Let $0^{s_1}1^{t_1}\cdots 0^{s_k}1^{t_k}$ be the binary string of a threshold graph $\Gamma$. Then
$$2\bigg[\frac{k(n-t_k)}{n-1}+\sum_{i=1}^k\frac{t_i-1}{t+S_i-1}\bigg{]}\geq E_\mathcal{A}(\Gamma)\geq 2\bigg{[}\frac{k}{t}+\sum_{i=1}^k\frac{t_i-1}{t+S_i-1}\bigg{]}.$$
\end{theorem}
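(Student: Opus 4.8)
The plan is to reduce the entire two-sided estimate to a single bound on the negative eigenvalues of $\mathcal{B}_\pi$. First I would observe that $\mathcal{A}=D^{-1}A$ has zero diagonal, so $\mathrm{tr}(\mathcal{A})=0$ and the eigenvalues of $\mathcal{A}$ sum to $0$; hence $\sum_{\lambda>0}\lambda=\sum_{\lambda<0}|\lambda|=\tfrac12 E_\mathcal{A}(\Gamma)$, giving $E_\mathcal{A}(\Gamma)=2\sum_{\lambda<0}|\lambda|$. By Theorem \ref{tldthm7} the spectrum splits as $\sigma(\mathcal{A})=\sigma(\mathcal{B}_\pi)\cup T$, and by the Remark the negative elements of $T$ are the numbers $-\frac{1}{t+S_i-1}$ with multiplicity $t_i-1$; their total contribution to $\sum_{\lambda<0}|\lambda|$ is exactly $\sum_{i=1}^k\frac{t_i-1}{t+S_i-1}$ (the term vanishes when $t_i=1$, so the sum over all $i$ is legitimate). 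Writing $\mu_1^-,\dots,\mu_k^-$ for the negative eigenvalues of $\mathcal{B}_\pi$ — there are exactly $k$ of them, since $\mathcal{B}_\pi$ is nonsingular by Theorem \ref{tldthm6} while $n_+(\mathcal{A})=k$ by Theorem \ref{tldthm1} and $T$ carries no positive eigenvalue — this yields
\[
E_\mathcal{A}(\Gamma)=2\Big[\sum_{i=1}^k\frac{t_i-1}{t+S_i-1}+\sum_{j=1}^k|\mu_j^-|\Big],
\]
so the whole theorem is equivalent to the chain $\frac{k}{t}\le\sum_{j=1}^k|\mu_j^-|\le\frac{k(n-t_k)}{n-1}$.

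For the upper estimate I would invoke Theorem \ref{tldthm9}: the smallest eigenvalue of $\mathcal{A}$ satisfies $\lambda_1\ge-\frac{n-t_k}{n-1}$. Since each $\mu_j^-$ is a negative eigenvalue of $\mathcal{A}$, we have $\mu_j^-\ge\lambda_1\ge-\frac{n-t_k}{n-1}$, hence $|\mu_j^-|\le\frac{n-t_k}{n-1}$, and summing the $k$ terms gives $\sum_{j}|\mu_j^-|\le\frac{k(n-t_k)}{n-1}$.

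For the lower estimate the natural first attempts — bounding each $\mu_j^-$ individually, or comparing with the negative eigenvalues of the $k$ disjoint $2\times2$ diagonal blocks of the symmetrization $X^{\frac12}A_\pi X^{\frac12}$ — turn out to be too weak, and this is the real obstacle. The route I expect to work is global: since $\mathcal{B}_\pi$ is stochastic its largest eigenvalue is $1$, while its diagonal entries give $\mathrm{tr}(\mathcal{B}_\pi)=\sum_{i=1}^k\frac{t_i-1}{t+S_i-1}$. The trace identity $\sum_j\mu_j^+-\sum_j|\mu_j^-|=\mathrm{tr}(\mathcal{B}_\pi)$ then forces $\sum_j|\mu_j^-|=\sum_j\mu_j^+-\mathrm{tr}(\mathcal{B}_\pi)\ge 1-\mathrm{tr}(\mathcal{B}_\pi)$, because $1$ is one of the positive eigenvalues $\mu_j^+$ and the rest are positive. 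It then remains to verify the elementary inequality $\mathrm{tr}(\mathcal{B}_\pi)\le 1-\frac{k}{t}$, which follows at once from $t+S_i-1\ge t$: indeed $\sum_i\frac{t_i-1}{t+S_i-1}\le\frac1t\sum_i(t_i-1)=\frac{t-k}{t}$. Combining, $\sum_j|\mu_j^-|\ge 1-\frac{t-k}{t}=\frac{k}{t}$, which closes the lower bound and hence the theorem.
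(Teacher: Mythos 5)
Your proof is correct, and most of its skeleton coincides with the paper's: both split $\sigma(\mathcal{A})$ into $\sigma(\mathcal{B}_\pi)$ and $T$ (Lemma \ref{tldlem1}, Theorem \ref{tldthm7}), both reduce the theorem to bounding the sum of moduli of the $k$ negative eigenvalues of $\mathcal{B}_\pi$, and your upper estimate (each negative eigenvalue of $\mathcal{B}_\pi$ is an eigenvalue of $\mathcal{A}$, hence at least $-\frac{n-t_k}{n-1}$ by Theorem \ref{tldthm9}) is exactly the paper's. The genuine difference is the lower bound, and it is to your advantage. The paper orders $\mu_1<\dots<\mu_{2k}=1$ and argues $\sum_{i=1}^{k}|\mu_i|\geq k|\mu_k|\geq \frac{k}{t}$; the second inequality amounts to the claim that \emph{every} negative eigenvalue of $\mathcal{B}_\pi$ has modulus at least $\frac{1}{t}$, which is asserted without any justification (it does not follow from Theorem \ref{tldthm9}, whose second inequality only yields $|\mu_1|\geq\frac{1}{t+s_1-1}$, a weaker statement since $\frac{1}{t+s_1-1}\leq\frac1t$). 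Your argument replaces this per-eigenvalue claim by a global one: row-stochasticity puts $1$ among the positive eigenvalues, so the trace identity gives $\sum_j|\mu_j^-|=\sum_j\mu_j^+-\mathrm{tr}(\mathcal{B}_\pi)\geq 1-\mathrm{tr}(\mathcal{B}_\pi)$, and the elementary estimate $\mathrm{tr}(\mathcal{B}_\pi)=\sum_i\frac{t_i-1}{t+S_i-1}\leq\frac{t-k}{t}$ closes the bound. This is fully rigorous and effectively repairs the gap in the published proof, at the cost of no extra machinery. One caveat common to both arguments: counting exactly $k$ negative eigenvalues of $\mathcal{B}_\pi$, and adding the contribution $\sum_i\frac{t_i-1}{t+S_i-1}$ from $T$ on top of $\sigma(\mathcal{B}_\pi)$, requires reading Theorem \ref{tldthm7} as a multiset identity — legitimate because the eigenvectors of Lemma \ref{tldlem1} are orthogonal to those lifted from $\mathcal{B}_\pi$, so multiplicities add — and you, like the paper, use this implicitly; it would be worth one sentence making it explicit.
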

\begin{proof}
Let $\mu_1<\mu_2<\ldots<\mu_{2k}=1$ be the eigenvalues of $\mathcal{B}_\pi$. Then we have 
\begin{eqnarray*}
\sum_{i=1}^{2k}|\mu_i|&=& 2\big{|}\sum_{i=1}^{k}\mu_i\big{|}+tr(\mathcal{B}_\pi)\\
&\geq& 2k|\mu_k|+\sum_{i=1}^{k}\frac{|C_{2i}|-1}{d(C_{2i})}\\
&\geq&\frac{2k}{t}+\sum_{i=1}^{k}\frac{t_i-1}{t+S_i-1}.
\end{eqnarray*}
Using Lemma \ref{tldlem1} and Theorem \ref{tldthm7},  we get
\begin{eqnarray*}
E_\mathcal{A}(\Gamma)&\geq& \frac{2k}{t}+\sum_{i=1}^k\frac{t_i-1}{t+S_i-1}+\sum_{i=1}^k\frac{t_i-1}{t+S_i-1}\\
&\geq& 2\Bigg{[}\frac{k}{t}+\sum_{i=1}^k\frac{t_i-1}{t+S_i-1}\Bigg{]}.
\end{eqnarray*}
Again, by Theorem \ref{tldthm9}, we have
\begin{eqnarray*}
\sum_{i=1}^{2k}|\mu_i|&=& 2\sum_{i=1}^{k}|\mu_i|+tr(\mathcal{B}_\pi)\\
&\leq& 2k|\mu_1|+\sum_{i=1}^{k}\frac{|C_{2i}|-1}{d(C_{2i})}\\
&\leq&\frac{2k(n-t_k)}{n-1}+ \sum_{i=1}^{k}\frac{t_i-1}{t+S_i-1}.
\end{eqnarray*}
Therefore,
$$E_\mathcal{A}(\Gamma)\leq2\Bigg{[}\frac{k(n-t_k)}{n-1}+\sum_{i=1}^k\frac{t_i-1}{t+S_i-1}\Bigg{]}.$$
Hence the proof follows.
\end{proof}


\section{Threshold graphs with {a} small number of partitions}
\label{section3}
In this section we discuss spectral properties of threshold graphs where $k=1$ or $k=2$. Later, in this section we characterize threshold graphs with a few distinct normalized eigenvalues.\\

\textbf{ Case I: $k=1$}\\
If $k=1$ then the binary string of $\Gamma$ is $0^s1^t$. Now if $s=1$ then $\Gamma$ is the complete graph $K_n$, whereas, if $t=1$  then $\Gamma$ is the star $S_n$. So in these two cases {the normalized  eigenvalues} of $\Gamma$ are $1,(\frac{-1}{n-1})^{n-1}$ and $1,0^{n-2} ,-1$ respectively. Let $s,t>1$. Then 
$$\mathcal{B}_\pi=\left[\
\begin{array}{cc}
0&1\\
\frac{s}{n-1}&\frac{t-1}{n-1}
\end{array}\right].$$
Therefore, the eigenvalues of {$\mathcal{A}$} are $1$, $0^{s-1}$, $(\frac{-1}{n-1})^{t-1}$ and $-\frac{n-t}{n-1}$.\\

\textbf{ Case II: $k=2$}\\
If $k=2$ then the binary string of $\Gamma$ is of the form $0^{s_1}1^{t_1}0^{s_2}1^{t_2}$, then 
$$\mathcal{B}_\pi=\left[\
\begin{array}{cccc}
0&\frac{t_1}{t}&0&\frac{t_2}{t}\\
\frac{s_1}{t+s_1-1}&\frac{t_1-1}{t+s_1-1}&0&\frac{t_2}{t+s_1-1}\\
0&0&0&1\\
\frac{s_1}{n-1}&\frac{t_1}{n-1}&\frac{s_2}{n-1}&\frac{t_2-1}{n-1}
\end{array}\right].$$
Now, we have the following theorem.
\begin{theorem}
\label{tldthm4.1}
Let $\Gamma$ be a threshold graph with the binary string $b=0^{s_1}1^{t_1}0^{s_2}1^{t_2}$. Then\\ 
(a) the multiplicity of the {normalized  eigenvalue} $\frac{-1}{n-1}$ is $t_2-1$,\\
(b) the multiplicity of the {normalized  eigenvalue} $\frac{-1}{t}$ is $t_1$ if and only if $s_1=1$ or $s_2(tt_1-1)+1$. 
\end{theorem}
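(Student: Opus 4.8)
The plan rests on the decomposition $\sigma(\mathcal{A})=\sigma(\mathcal{B}_\pi)\cup T$ from Theorem~\ref{tldthm7}, together with the fact (Theorem~\ref{tldthm5}) that every eigenvalue of $\mathcal{B}_\pi$ is simple. Consequently, for any scalar $\mu$ the multiplicity of $\mu$ in $\mathcal{A}$ equals its multiplicity among the string eigenvectors of Lemma~\ref{tldlem1} plus $1$ if $\mu\in\sigma(\mathcal{B}_\pi)$ and $0$ otherwise. Since $S_1=s_1$ and $S_2=s$, Lemma~\ref{tldlem1} attaches to $-\tfrac{1}{n-1}=-\tfrac{1}{t+S_2-1}$ the string multiplicity $t_2-1$, and no other string eigenvalue can equal it (the only candidates are $-\tfrac{1}{t+s_1-1}$, which would need $s_2=0$, and $0$). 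Likewise $-\tfrac{1}{t}=-\tfrac{1}{t+S_1-1}$ receives string multiplicity $t_1-1$ exactly when $s_1=1$, and $0$ otherwise. Thus both parts reduce to deciding membership in $\sigma(\mathcal{B}_\pi)$, which I attack directly through the eigen-equation $B_\pi x=\lambda D_\pi x$ for the explicit $4\times4$ matrices $B_\pi$ and $D_\pi=\mathrm{diag}(t,t+s_1-1,t_2,n-1)$ displayed above.

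For part (a) I put $\lambda=-\tfrac{1}{n-1}$ and show the system has only the trivial solution. The third equation gives $x_3=-(n-1)x_4$. Subtracting the first row equation from the second, and separately substituting $x_3$ into the fourth, each yields an expression for $(s_1(n-1)-t)x_1$; comparing them (and cancelling $s_2$) forces $x_2=(n-1)^2x_4$. Feeding $x_1,x_2,x_3$, all written through $x_4$, back into the first equation then equates the manifestly positive quantity $t_1(n-1)^2+t_2$ with $-\tfrac{t\,s_2(n-1)}{s_1(n-1)-t}$; since $s_1\ge1$ and $n-1>t$ give $s_1(n-1)-t>0$, the right-hand side is negative, so $x_4=0$ and hence $x=0$. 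Therefore $-\tfrac{1}{n-1}\notin\sigma(\mathcal{B}_\pi)$ and its multiplicity in $\mathcal{A}$ is exactly $t_2-1$.

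For part (b) I repeat the computation with $\lambda=-\tfrac{1}{t}$. The first equation gives $x_1=-t_1x_2-t_2x_4$; inserting this into the second equation, it reduces to $(1-s_1)\big[\tfrac{tt_1-1}{t}x_2+t_2x_4\big]=0$. Hence if $s_1=1$ the second equation holds automatically, and one checks that $(-t_1,\,1,\,0,\,0)^T$ is a genuine eigenvector, so $-\tfrac{1}{t}\in\sigma(\mathcal{B}_\pi)$; if $s_1>1$ it forces $x_2=-\tfrac{tt_2}{tt_1-1}x_4$ (with $x_4\neq0$, else $x=0$). Substituting this together with $x_3=-tx_4$ into the fourth equation and clearing denominators, I expect everything to collapse to the single scalar identity $E=0$ with $E=(t^2-1)\big[(s_1-1)-s_2(tt_1-1)\big]$; as $t\ge2$ this means $s_1=s_2(tt_1-1)+1$. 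Combining with the string bookkeeping of the first paragraph gives the multiplicity: $s_1=1$ yields $(t_1-1)+1=t_1$, while $s_1=s_2(tt_1-1)+1>1$ contributes $0+1=1$, which equals $t_1$ precisely when $t_1=1$.

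The main obstacle is twofold. First, verifying the clean factorization $E=(t^2-1)\big[(s_1-1)-s_2(tt_1-1)\big]$ after substitution is the one genuinely delicate computation; the cancellations that remove the mixed $s_1t_1$ and quartic terms are exactly what make the stated condition come out so simply, and I would organize the algebra by grouping the coefficients of $s_1$ and of $s_2$ and using $t_1+t_2=t$ and $n-1=s_1+s_2+t-1$ to expose the common factor $t^2-1$. Second, one must be explicit that the second branch of the equivalence is operative only when $t_1=1$: for $s_1>1$ the string eigenvectors never contribute to $-\tfrac{1}{t}$, so the entire multiplicity must be supplied by the single simple eigenvalue of $\mathcal{B}_\pi$, and this matches $t_1$ only in the case $t_1=1$ (where $s_2(tt_1-1)+1=s_2(t-1)+1$)—a point I would state alongside the displayed condition so that the ``if and only if'' is unambiguous.
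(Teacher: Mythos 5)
Your proposal is correct and follows essentially the same route as the paper: reduce everything, via Lemma~\ref{tldlem1}, Theorem~\ref{tldthm5} and Theorem~\ref{tldthm7}, to deciding whether the given value lies in $\sigma(\mathcal{B}_\pi)$, and settle that by solving the $4\times4$ system $B_\pi x=\lambda D_\pi x$. Your part (a) supplies the elimination details that the paper compresses into the phrase ``Solving (\ref{tldeqn5})'', and your part (b) elimination reproduces exactly the paper's determinant factorization $\tfrac{1}{t^3}(s_1-1)(1-t^2)(s-1-tt_1s_2)$, since $(s_1-1)-s_2(tt_1-1)=s-1-tt_1s_2$; I verified that your expression $E$ does collapse as you expected.

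The one genuine divergence is in what you end up proving in (b), and there you are right while the paper's literal statement is not. The paper's proof establishes only the ``only if'' direction (multiplicity $t_1$ forces $-\tfrac{1}{t}\in\sigma(\mathcal{B}_\pi)$, hence the vanishing determinant, hence $s_1=1$ or $s_1=s_2(tt_1-1)+1$); the converse is never checked, and it fails exactly where you say it does. If $s_1=s_2(tt_1-1)+1>1$ and $t_1>1$, the string eigenvectors contribute nothing to $-\tfrac{1}{t}$ (they sit at $-\tfrac{1}{t+s_1-1}\neq-\tfrac{1}{t}$), so by simplicity of the eigenvalues of $\mathcal{B}_\pi$ the total multiplicity is $1\neq t_1$. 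A concrete instance is $b=0^{6}1^{2}01$: here $t=3$, $s_2(tt_1-1)+1=6=s_1$, yet $-\tfrac13$ is a simple eigenvalue of $\mathcal{A}$ while $t_1=2$. Your corrected equivalence --- multiplicity $t_1$ if and only if $s_1=1$, or $t_1=1$ together with $s_1=s_2(t-1)+1$ --- is the accurate statement, and your insistence on making the $t_1=1$ restriction explicit is precisely the point the paper's proof glosses over.
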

\begin{proof}
(a) {By Lemma \ref{tldlem1}, $\frac{-1}{n-1}$ is an eigenvalue of $\mathcal{A}$ with multiplicity at least $t_2-1$. Thus we only have to show that $\frac{-1}{n-1}$ is not an eigenvalue of $\mathcal{B}_\pi$. Suppose, for contradiction, that $\frac{-1}{n-1}$ is an eigenvalue of $\mathcal{B}_\pi$} with corresponding eigenvector $x=[x_1\ x_2\ x_3\ x_4]^T$. The eigenvalue equations {of $\mathcal{B}_\pi$} for the eigenvalue $\frac{-1}{n-1}$ are
\begin{equation}
\label{tldeqn5}
\left.
\begin{aligned}
t_1x_2+t_2x_4&=-\frac{t}{n-1}x_1,\\
s_1x_1+t_1x_2+t_2x_4&=\frac{s_2}{n-1}x_2,\\
x_4&=-\frac{1}{n-1}x_3,\\
s_1x_1+t_1x_2+s_2x_3+t_2x_4&=0.
\end{aligned}
\right\}
\end{equation}
Solving (\ref{tldeqn5}), we have
$$x_1=x_2=x_3=x_4=0.$$
Hence the multiplicity of the {normalized  eigenvalue} $-\frac{1}{n-1}$ is exactly $t_2-1$.\\

(b) Let the multiplicity of the {normalized  eigenvalue} $-\frac{1}{t}$ is $t_1$. Then $-\frac{1}{t}$ is also an eigenvalue of $\mathcal{B}_\pi$ with corresponding eigenvector $x=[x_1\ x_2\ x_3\ x_4]^T$. Then from the eigenvalue equations {of $\mathcal{B}_\pi$} for the eigenvalue $\frac{-1}{t}$, we have
\begin{eqnarray*}
x_1+t_1x_2+t_2x_4&=&0,\\
s_1x_1+(t_1+\frac{s_1-1}{t})x_2+t_2x_4&=&0,\\
\frac{1}{t}x_3+x_4&=&0,\\
s_1x_1+t_1x_2+s_2x_3+(t_2+\frac{s-1}{t})x_4&=&0.
\end{eqnarray*}
For a non-trivial solution, we must have
$$\left|\
\begin{array}{cccc}
1&t_1&0&t_2\\
s_1&t_1+\frac{s_1-1}{t}&0&t_2\\
0&0&\frac{1}{t}&1\\
s_1&t_1&s_2&t_2+\frac{s-1}{t}=0
\end{array}\right|=0,$$
that is, $$\frac{1}{t^3}(s_1-1)(1-t^2)(s-1-tt_1s_2)=0.$$
Since $t>1$, we have, either
$s_1=1\text{ or }s_1=s_2(t_1t-1)+1.$ Hence the result follows.
\end{proof}
Now, consider the matrix 
$$M=\left[\
\begin{array}{cccc}
0&1&0&0\\
1&1&0&0\\
0&1&1&0\\
0&1&0&1
\end{array}\right],$$
which is a non-singular matrix. Now
$$M^{-1}\mathcal{B}_\pi M=\left[\
\begin{array}{cccc}
1&\frac{s_1}{t+s_1-1}&0&\frac{t_2}{t+s_1-1}\\
0&-\frac{s_1}{t+s_1-1}&0&\frac{t_2(s_1-1)}{t(t+s_1-1)}\\
0&-\frac{s_1}{t+s1-1}&0&\frac{t_1+s_1-1}{t+s_1-1}\\
0&-\frac{s_1s_2}{(t+s_1-1)(n-1)}&\frac{s_2}{n-1}&-\frac{s_2t_2+t+s_1-1}{(t+s_1-1)(n-1)}
\end{array}\right]=\left[\begin{array}{cc}
1&\textbf{x}\\
O^T&B^*
\end{array}\right]$$
where $B^*=\frac{1}{t+s_1-1}\left[\begin{array}{ccc}
-s_1&0&\frac{t_2(s_1-1)}{t}\\-s_1&0&t_1+s_1-1\\-\frac{s_1s_2}{n-1}&\frac{s_2(t+s_1-1)}{n-1}&-\frac{s_2t_2+t+s_1-1}{n-1}
\end{array}\right].$

In particular, if $s_1=1$, then the eigenvalues of {$\mathcal{A}$} are 
$1,0^{s-2},$ $(\frac{-1}{t})^{t_1},$ $(\frac{-1}{n-1})^{t_2-1},$ $\frac{1}{2t(n-1)}\Big{[}-(s_2t_2+t)\pm\sqrt{(s_2t_2+t)^2+4t_1s_2t(n-1)}\Big{]}.$\\

A \textit{pineapple graph} (see Figure \ref{tldfig4}(a)) is obtained by appending pendant vertices to a vertex of a complete graph (of at least three vertices). The pineapple graph is a threshold graph with exactly one dominating vertex. The general form of the binary string of a pineapple graph is $01^{t-1}0^{n-t-1}1$. Using the above argument, we get the {normalized eigenvalues} of a pineapple which are
$$1,\ 0^{n-t-2},\ (\frac{-1}{t})^{t-1},\ \frac{1}{2t}\Bigg{[}-1\pm \sqrt{1+\frac{4t(t-1)(n-t-1)}{n-1}}\Bigg{]}.$$

\subsection{Threshold graph with at most five distinct eigenvalues}

\begin{theorem}
Let $\Gamma$ be a threshold graph on n vertices. Then $\Gamma$ has\\
(a) two distinct {normalized eigenvalues} if and only if $b=01^{n-1}$,\\
(b) three distinct {normalized eigenvalues} if and only if $b=0^{n-1}1$,\\
(c) four distinct {normalized eigenvalues} if and only if $b=0^s1^{n-s},\ 1<s<n-1$ or $b=01^{n-3}01$.
\end{theorem}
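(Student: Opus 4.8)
The plan is to count distinct normalized eigenvalues through the decomposition $\sigma(\mathcal{A})=\sigma(\mathcal{B}_\pi)\cup T$ of Theorem \ref{tldthm7}. Since Theorem \ref{tldthm5} guarantees that the $2k$ eigenvalues of $\mathcal{B}_\pi$ are simple, we have $|\sigma(\mathcal{B}_\pi)|=2k$, so the number of distinct normalized eigenvalues is
$$N=|\sigma(\mathcal{B}_\pi)\cup T|=2k+|T\setminus\sigma(\mathcal{B}_\pi)|.$$
In particular $N\geq 2k$, so any graph with $N\leq 4$ must have $k\in\{1,2\}$. Two facts govern the overlap: first, $0\notin\sigma(\mathcal{B}_\pi)$ because $\det(\mathcal{B}_\pi)=(-1)^k r_1\cdots r_{2k}\neq 0$ by Theorem \ref{tldthm6}; second, $0\in T$ precisely when $s>k$, by the remark following Lemma \ref{tldlem1}.

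First I would dispose of $k=1$, where $b=0^s1^t$ and $\mathcal{B}_\pi$ is $2\times 2$ with eigenvalues $1$ and $-\frac{n-t}{n-1}$. Splitting into the subcases $s=1$; $s>1,\,t=1$; and $s,t>1$, a direct computation gives $T\subseteq\sigma(\mathcal{B}_\pi)$, then $T=\{0\}$, then $T=\{0,-\tfrac{1}{n-1}\}$ respectively, so that $N=2,3,4$. This already yields $b=01^{n-1}$ for (a), $b=0^{n-1}1$ for (b), and the family $b=0^s1^{n-s}$ with $1<s<n-1$ for (c); one only checks that $1,\,-\tfrac{n-t}{n-1},\,0,\,-\tfrac{1}{n-1}$ are genuinely distinct in the last subcase, which fails only at the boundary value $s=1$ already excluded. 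Since $2k=4>3$ when $k=2$, neither (a) nor (b) can occur for $k=2$, so these two parts are then complete.

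The substantive part is $k=2$, where $b=0^{s_1}1^{t_1}0^{s_2}1^{t_2}$ and $2k=4$, so $N=4$ forces the stronger condition $T\subseteq\sigma(\mathcal{B}_\pi)$. Here I would extract three constraints in turn. Because $0\notin\sigma(\mathcal{B}_\pi)$, the overlap condition forces $0\notin T$, hence $s=s_1+s_2=k=2$, i.e. $s_1=s_2=1$. Because $t+S_2-1=n-1$, the only eigenvalue $T$ can contribute from the block $t_2$ is $-\tfrac{1}{n-1}$, which by Theorem \ref{tldthm4.1}(a) is never an eigenvalue of $\mathcal{B}_\pi$; hence $t_2=1$. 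These two constraints already pin the string down to $b=01^{t_1}01=01^{n-3}01$. For the converse I would verify $T\subseteq\sigma(\mathcal{B}_\pi)$ for this string: when $t_1=1$ we have $T=\emptyset$, and when $t_1>1$ the only element of $T$ is $-\tfrac{1}{t}$, which lies in $\sigma(\mathcal{B}_\pi)$ precisely because $s_1=1$ triggers Theorem \ref{tldthm4.1}(b), making the multiplicity of $-\tfrac{1}{t}$ equal to $t_1$, one more than the $t_1-1$ furnished by Lemma \ref{tldlem1}, the extra copy necessarily coming from $\mathcal{B}_\pi$. Thus $N=4$ exactly for $b=01^{n-3}01$, completing (c).

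The main obstacle is the $k=2$ analysis: one must carefully combine the nonvanishing of $\det(\mathcal{B}_\pi)$ with both parts of Theorem \ref{tldthm4.1} to show that keeping $T$ inside the spectrum of $\mathcal{B}_\pi$ is equivalent to the two defect conditions $s_1=s_2=1$ and $t_2=1$, and then to confirm that these conditions are sufficient rather than merely necessary.
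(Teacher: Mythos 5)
Your proof is correct and takes essentially the same route as the paper: both count distinct eigenvalues through the decomposition $\sigma(\mathcal{A})=\sigma(\mathcal{B}_\pi)\cup T$ (Theorem \ref{tldthm7}) together with the simplicity of the $2k$ eigenvalues of $\mathcal{B}_\pi$ (Theorem \ref{tldthm5}) to force $k\in\{1,2\}$, settle $k=1$ by direct computation, and settle $k=2$ by combining the nonvanishing of $\det(\mathcal{B}_\pi)$ (equivalently, $0\notin\sigma(\mathcal{B}_\pi)$) with both parts of Theorem \ref{tldthm4.1}. The only difference is organizational: you state the overlap condition $T\subseteq\sigma(\mathcal{B}_\pi)$ explicitly and verify sufficiency for $b=01^{n-3}01$, a converse check the paper leaves implicit.
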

\begin{proof}
Since all the eigenvalues of $\mathcal{B}_\pi$ are distinct, $\Gamma$ has at most four distinct {normalized eigenvalues}, and we must have $k=1$ or $k=2$. The proofs of (a) and (b) are straight forward, since the complete graph is the only graph with two distinct {normalized eigenvalues}, whereas, star is the only threshold graph with three distinct {normalized eigenvalues}.\\
 
 (c) Let $\Gamma$ have four distinct {normalized eigenvalues}, namely, $\lambda_1< \lambda_2<\lambda_3<\lambda_4=1$. Here, we have two cases.\\
 \textbf{case I. }If $\lambda_3=0$, then k=1. Let $b=0^s1^{n-s}$. Note that the value of $s$ cannot be equal to 1 or $n-1$. If $1<s<n-1$, then the eigenvalues of {$\mathcal{A}$} are $1$, $0^{s-1}$, $(\frac{-1}{n-1})^{n-s-1}$ and $-\frac{s}{n-1}$. Therefore, $b=0^s1^{n-s},\ 1<s<n-1$.\\
\textbf{Case II. }If $\lambda_3>0$, then $k=2$. Let $b=0^{s_1}1^{t_1}0^{s_2}1^{t_2}$. Since $0$ is not an eigenvalue of {$\mathcal{A}$}, therefore, $s_1=1$ and $s_2=1$. Since $s_1=1$, $-\frac{1}{t}$ is an eigenvalue of $\mathcal{B}_\pi$. Again, since $-\frac{1}{n-1}$ is not an eigenvalue of $\mathcal{B}_\pi$, we have $t_2=1$, otherwise $\Gamma$ must have five distinct { normalized eigenvalues}. Therefore $b=01^{n-3}01.$
Thus the proof is obtained by combining Case I and Case II.
\end{proof}
Next, we characterize threshold graphs with five distinct  {normalized eigenvalues}. If a {threshold graph has} five distinct  {normalized eigenvalues}, then $k$ should be equal to 2. We have already seen that a pineapple graph has exactly five distinct  {normalized eigenvalues}. Now, in the following theorem, we obtain all possible threshold graphs with five distinct {normalized eigenvalues}.

\begin{figure}[h]
\centering
\includegraphics[width=12.5cm]{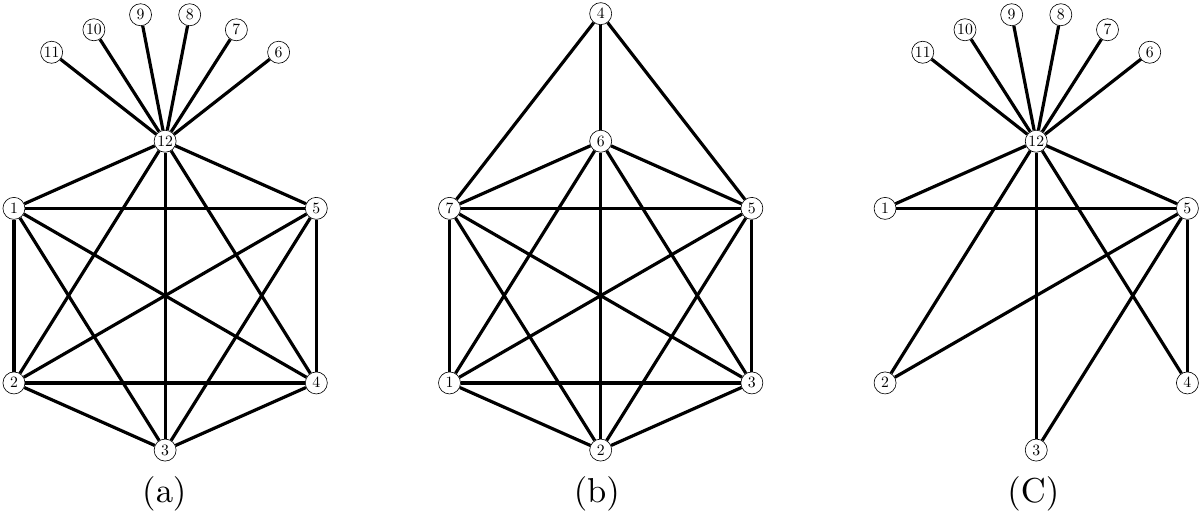}
\caption{Threshold graphs with five distinct eigenvalues.} \label{tldfig4}
\end{figure}
\begin{theorem}
Let $\Gamma$ be a threshold graph on $n\geq5$ vertices. Then $\Gamma$ has five distinct {normalized eigenvalues} if and only if one of the following conditions holds\\
(i) $b=01^{n-t-2}01^t$, $t>1$,\\
(ii) $b=0^s10^{n-s-2}1$,\\
(iii) $b=0^{s_1}1^{t-1}0^{s_2}1$, such that, $s_1=s_2(t^2-t-1)+1,$\\
(iv) $\Gamma$ is a pineapple graph.
\end{theorem}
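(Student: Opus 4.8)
The plan is to build everything on the decomposition $\sigma(\mathcal{A})=\sigma(\mathcal{B}_\pi)\cup T$ of Theorem \ref{tldthm7}, together with the fact (Theorem \ref{tldthm5}) that the $2k$ eigenvalues of $\mathcal{B}_\pi$ are simple. Since every eigenvalue of $\mathcal{B}_\pi$ belongs to $\sigma(\mathcal{A})$, the graph has at least $2k$ distinct normalized eigenvalues. As $k=1$ yields at most four distinct eigenvalues (the two eigenvalues of the $2\times2$ matrix $\mathcal{B}_\pi$ together with the at most two values $0,-\tfrac{1}{n-1}$ available in $T$), while $k\ge 3$ already forces at least six, the hypothesis of five distinct eigenvalues forces $k=2$. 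Hence $\mathcal{B}_\pi$ is $4\times4$ and contributes exactly four distinct eigenvalues, one of which is $1$.

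With $k=2$ fixed, I would reduce the statement to a counting problem: $\Gamma$ has five distinct normalized eigenvalues if and only if $T$ contributes exactly one value lying outside $\sigma(\mathcal{B}_\pi)$. By the remark after Lemma \ref{tldlem1}, the only possible members of $T$ are $0$ (present iff $s>2$), $-\tfrac{1}{t+s_1-1}$ (present iff $t_1>1$), and $-\tfrac{1}{n-1}$ (present iff $t_2>1$), and a short computation shows these three values are pairwise distinct (the two negative ones coincide only if $s_2=0$). I then need to know which of them are genuinely new. Theorem \ref{tldthm6} gives $\det(\mathcal{B}_\pi)\neq0$, so $0\notin\sigma(\mathcal{B}_\pi)$; Theorem \ref{tldthm4.1}(a) gives $-\tfrac{1}{n-1}\notin\sigma(\mathcal{B}_\pi)$. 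Thus $0$ and $-\tfrac{1}{n-1}$ are always new whenever present. For the last candidate I would solve $\mathcal{B}_\pi x=-\tfrac{1}{t+s_1-1}x$ directly: since $-\tfrac{1}{t+s_1-1}=-\tfrac{1}{t}$ exactly when $s_1=1$, Theorem \ref{tldthm4.1}(b) shows that $-\tfrac{1}{t+s_1-1}\in\sigma(\mathcal{B}_\pi)$ precisely when $s_1=1$ or when the relation $s_1=s_2(t\,t_1-1)+1$ holds.

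The remainder is a finite case analysis on the truth values of $s>2$, $t_1>1$, $t_2>1$ and $s_1=1$, always under $k=2$. The organizing observation is that $s_1>1$ already forces $s>2$, so the candidate $-\tfrac{1}{t+s_1-1}$ can never be the sole new value; this is what channels the analysis into the listed families. If $s=2$ (so $s_1=s_2=1$) the unique new value must be $-\tfrac{1}{n-1}$, forcing $t_2>1$ and the string of (i). If $s>2$ with $t_1=t_2=1$ the unique new value is $0$, giving the two--dominating--vertex string of (ii). If $s>2$, $t_1>1$, $t_2=1$ and $s_1=1$, then $-\tfrac1t$ is absorbed into $\sigma(\mathcal{B}_\pi)$ and the only new value is $0$, which is the pineapple of (iv). The remaining borderline, in which $-\tfrac1t$ is forced into $\sigma(\mathcal{B}_\pi)$ through the second alternative of Theorem \ref{tldthm4.1}(b) specialized to $t_1=t-1$, is exactly the arithmetic constraint $s_1=s_2(t^2-t-1)+1$ of (iii). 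For the converse I would, in each family, write down the complete spectrum (as was done for the pineapple and in the $k=2$ computation) and verify that the five listed values are pairwise distinct.

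I expect the main obstacle to be the explicit determination of when $-\tfrac{1}{t+s_1-1}$ (equivalently $-\tfrac1t$) lies in $\sigma(\mathcal{B}_\pi)$: this requires solving the $4\times4$ eigenvalue system and extracting a single scalar relation, and it is precisely where Theorem \ref{tldthm4.1}(b) carries the weight. A secondary but real difficulty is the bookkeeping needed to guarantee pairwise distinctness of the five claimed eigenvalues in each family, so that no accidental coincidence among the roots of the characteristic polynomial of $\mathcal{B}_\pi$ silently lowers the count below five; correctly attributing the boundary subcases $t_1=1$ and $s_2=1$ to exactly one family is the final piece.
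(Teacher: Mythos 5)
Your overall architecture is exactly the paper's: force $k=2$ via Theorems \ref{tldthm5} and \ref{tldthm7}, then count how many elements of $T$ fall outside $\sigma(\mathcal{B}_\pi)$, using Theorem \ref{tldthm6} to exclude $0$ and Theorem \ref{tldthm4.1}(a) to exclude $-\tfrac{1}{n-1}$; all of that is sound, and your observation that $s_1>1$ forces $s>2$ is the right organizing principle. The genuine gap is the step you yourself flag as the crux, and specifically the parenthetical ``equivalently $-\tfrac{1}{t}$''. Theorem \ref{tldthm4.1}(b) is a statement about the number $-\tfrac{1}{t}$, whereas the element of $T$ you must test is $-\tfrac{1}{t+s_1-1}$; these coincide only when $s_1=1$, and when $s_1>1$ the theorem gives no information at all about $-\tfrac{1}{t+s_1-1}$. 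Worse, the direct computation you propose settles the matter against you: writing the system as $B_\pi x=\lambda D_\pi x$ with $\lambda=-\tfrac{1}{t+s_1-1}$, the third equation gives $x_3=-(t+s_1-1)x_4$, while subtracting the second equation from the fourth gives $x_3=-\tfrac{1}{t+s_1-1}x_4$; since $t+s_1-1>1$ this forces $x_4=x_3=0$, and the first two equations then admit a nonzero solution only if $s_1(t+s_1-1)=t$, i.e.\ only if $s_1=1$. So $-\tfrac{1}{t+s_1-1}\in\sigma(\mathcal{B}_\pi)$ if and only if $s_1=1$, and your route can never produce family (iii): when $s>2$, $t_1>1$, $t_2=1$ and $s_1>1$, the graph always has six distinct eigenvalues.

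This is not a patchable presentation issue, because condition (iii) itself is false, and the paper's own proof (Case II, Subcase (b)) commits exactly the same conflation of $-\tfrac1t$ with $-\tfrac{1}{t+s_1-1}$ --- you have faithfully reproduced the published argument, gap included. Concretely, take $t=3$, $s_2=1$ in (iii), so $s_1=6$ and $b=0^61^201$ with $n=10$. Then $\sigma(\mathcal{B}_\pi)=\bigl\{1,\,-\tfrac13,\,\tfrac{-13\pm\sqrt{553}}{48}\bigr\}$; indeed $(1,-3,-15,5)^T$ is an eigenvector of $\mathcal{B}_\pi$ for $-\tfrac13$, so $-\tfrac1t$ does lie in $\sigma(\mathcal{B}_\pi)$, exactly as Theorem \ref{tldthm4.1}(b) predicts. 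But $T$ contributes $0$ (multiplicity $5$) and $-\tfrac{1}{t+s_1-1}=-\tfrac18$ (multiplicity $1$), neither of which is an eigenvalue of $\mathcal{B}_\pi$, so this graph has six distinct normalized eigenvalues although its string satisfies (iii). The point is that for $s_1>1$ the membership $-\tfrac1t\in\sigma(\mathcal{B}_\pi)$ creates no collision with $T$ and therefore cannot lower the count. The characterization your own proposed computation actually proves consists of families (i), (ii) and (iv) alone: in the case $t_1>1$, $t_2=1$, five distinct eigenvalues occur precisely for the pineapple $s_1=1$.
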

\begin{proof}
Let $\lambda_1<\lambda_2<\lambda_3<\lambda_4<\lambda_5=1$ are the eigenvalues of {$\mathcal{A}$}. So, $k=2$. Let $b=0^{s_1}1^{t_1}0^{s_2}1^{t_2}$. Here,  two cases are possible:\\ 
\textbf{Case I.} If $\lambda_3<0$. Then $s_1=1$, $s_2=1$ and $-\frac{1}{t}$ is an eigenvalue of $\mathcal{B}_\pi$. We also have $\lambda_3=-\frac{1}{n-1}$. Therefore, by Theorem \ref{tldthm4.1}, we get $t_2>1$. Hence $b=01^{n-t-2}01^t,\ t>1$.\\
 \textbf{Case II.} Let $\lambda_3=0$, then $s>2$ and all four nonzero eigenvalues {of $\mathcal{A}$} are also the eigenvalues of $\mathcal{B}_\pi$. Since $-\frac{1}{n-1}$ is not an eigenvalue of $\mathcal{B}_\pi$, therefore, $t_2=1$. Let $b=0^{s_1}1^{t-1}0^{s_2}1$. Now, we have two subcases.\\
 \textbf{Subcase (a)} Let $t=2$. Then the eigenvalues of $\mathcal{B}_\pi$ are the nonzero eigenvalues of {$\mathcal{A}$}. Eventually, $b=0^s10^{n-s-2}1$. \\
 \textbf{Subcase (b)} If $t>2$, $-\frac{1}{t}$ is an eigenvalue of $\mathcal{A}$. Therefore $\Gamma$ has five distinct {normalized eigenvalues}, only if $-\frac{1}{t}$ is also an eigenvalue of $\mathcal{B}_\pi$. Thus, by Theorem \ref{tldthm4.1}, we have $s_1=1$ or $s_1=s_2(t^2-t-1)+1$. Now, if $s_1=1$ then $b=01^{t-1}0^{n-t-1}1$, and thus $\Gamma$ is a pineapple graph. Otherwise, $b=0^{s_1}1^{t-1}0^{s_2}1$ with $s_1=s_2(t^2-t-1)+1$. Hence, the proof is completed.
\end{proof}
\subsection*{Conclusions}
In this article we have studied the {normalized eigenvalues} of threshold graphs. We have also characterized threshold graphs, with the binary string $0^{s_1}1^{t_1}\ldots0^{s_k}1^{t_k}$, for small values of $k$. Now our observation, from  
 Theorem \ref{tldthm1} and Theorem \ref{tldthm5}, is that a threshold graph with the binary string $0^{s_1}1^{t_1}\ldots0^{s_k}1^{t_k}$ has exactly $k$ distinct positive normalized eigenvalues. This implies that the binary strings for
 two cospectral threshold graphs must have the same value for $k$. Thus, the question that arises is: are two threshold graphs having the same {normalized eigenvalues}  always isomorphic?   

\section*{Acknowledgements}
We are very grateful to the referees for their comments and suggestions, which helped to improve the manuscript. We also thankful to Asok K.~Nanda for his kind suggestions during the writing of the manuscript.  Ranjit Mehatari is supported by CSIR Grant No. 09/921(0080)/2013-EMR-I, India.

\end{document}